\newtheorem{theorem}{Theorem}[section]
\newtheorem{lemma}[theorem]{Lemma}
\newtheorem{proposition}[theorem]{Proposition}
\newtheorem{claim}[theorem]{Claim}
\theoremstyle{definition}
\newtheorem{definition}[theorem]{Definition}
\newtheorem{problem}[theorem]{Problem}
\newtheorem{example}[theorem]{Example}
\newtheorem{definitions and remarks}[theorem]{Definitions and Remarks}
\theoremstyle{remark}
\newtheorem{remark}[theorem]{Remark}
\newtheorem{notation}[theorem]{Notation}
\numberwithin{equation}{section} 
\newcommand{\umon}[1]{\pmb{u}^{\pmb{#1}}}
\newcommand{\wmon}[1]{\pmb{w}^{\pmb{#1}}}
\newcommand{\xmon}[1]{\pmb{x}^{\pmb{#1}}}
\newcommand{\sm}[1]{{\scriptstyle (#1)}}
\newcommand{\RMIauthor}[1]{{\sc #1:}} 
\newcommand{\RMIpaper}[1]{{\rm #1.}} 
\newcommand{\RMIbook}[1]{{\sl #1.}} 
\newcommand{\RMIjournal}[1]{{\sl #1}}
\title[Local monomialization of first integrals]{Local monomialization of a system of first integrals of Darboux type}
\author[A.~Belotto]{Andr\'e Belotto da Silva}
\address{University of Toronto, Department of Mathematics, 40 St. George Street,
Toronto, ON, Canada M5S 2E4}
\email[A.~Belotto]{{andrebelotto@gmail.com}}
\subjclass{Primary: 32S45, 32S65. Secondary: 34C08, 34C20}
\begin{document}

\maketitle

\section*{Abstract}
Given a real- or complex-analytic singular foliation $\theta$ with $n$ first integrals of {meromorphic or} Darboux type $(f_1,\dots,f_n)${,} we prove that there exists a local monomialization of the first integrals. In particular, if $\theta$ is generated by the $n$ first integrals, we prove the existence of a local reduction of singularities of $\theta$ to monomial singularities.

\setcounter{tocdepth}{3}
%\tableofcontents

\section{Introduction}

The subject of this article is \emph{reduction of singularities} of singular foliations, a classical problem which have interested mathematicians since the beginning of twentieth century \cite[1901]{BE}. The best results to date are valid only in low-dimensions; e.g. resolution of singularities of foliations in dimension two (Bendixson and Seidenberg \cite{BE,Sei}), and dimension three (Cano \cite[2004]{Can}, Panazzolo \cite[2006]{Pan} and McQuillan and Pannazolo \cite[2014]{McP} - see also Cano, Roche and Spivakovsky \cite[2015]{CRS}). In arbitrary dimensions, Camacho, Cano and Sad have proven a resolution of singularities of vector fields under the additional hypothesis that all singularities are absolutely isolated \cite[1989]{CS}. {In this paper we are interested in providing a monomialization of first integrals (of meromorphic or Darboux type). In particular, we provide a local reduction of singularities of completely integrable foliations in arbitrary dimensions.} One of the motivations of this problem is the study of pseudo-abelian integrals \cite{Pavao,Nov}.

Let $M$ be a complex- or real-analytic manifold (i.e., the base field $\mathbb{K}$ is $\mathbb{R}$ or $\mathbb{C}$) and $\theta$ be an \emph{involutive singular distribution} (i.e., a coherent sub sheaf of the sheaf of vector fields over $M$, denoted by $Der_M$, such that for each point $p$ in $M$ the stalk $\theta \cdot \mathcal{O}_p$ is closed under the Lie bracket operation). Note that $\theta$ generates a singular foliation over $M$ (by the Stefan-Sussmann Theorem \cite{Ste,Suss}). 

Denote by $\mathcal{K}$ a sub-field of $\mathbb{K}$. We say that $\theta$ has $n$ first integrals of $\mathcal{K}$\emph{-Darboux type} (without an exponential factor) at a point $p \in M$ {if the foliation generated by $\theta$ is tangent to the leaves of $n$ meromorphic $1$-form germs
\[
\omega_i:= \sum_{j} k_{i,j} \frac{d g_{i,j}}{g_{i,j}}, \quad k_{i,j} \in \mathcal{K} \text{ and } g_{i,j} \in \mathcal{O}_p, \text{ for } i=1, \ldots, n
\]
such that $\omega_1 \wedge \ldots \wedge \omega_n \not\equiv 0$. Equivalently, there exists $n$ (complex multi-valued) function germs $f_i = \prod g_{i,j}^{k_{i,j}}$ such that} $df_1 \wedge \cdots \wedge df_n \not\equiv 0$ and $\partial(f_i) \equiv 0$ for all derivations $\partial$ in the stalk ${\theta \cdot \mathcal{O}_p}$.

{
\begin{remark}\label{rk:DarbouxFI}
In many references, Darboux first integrals (also called ``generalized Darboux" first integrals) have an exponential factor, that is, the first integrals have the form $f = exp\left( \frac{\phi}{\psi}\right)\prod g_{j}^{k_{j}} $ where $\phi$ and $\psi$ are analytic germs (compare \cite{Pavao,Darboux2,Nov}). In this work, we always assume that $\psi=1$ (c.f. \cite{Darboux2}).  
\end{remark}
}

We address the following problem:

\begin{problem}
Suppose that $\theta$ has $n$ first integrals of $\mathcal{K}$-Darboux type at a point $p \in M$. The problem consists in finding a bimeromorphic and proper morphism $\sigma: \widetilde{M} \to M$ {(and a simple normal crossing divisor $\widetilde{E}\subset \widetilde{M}$ such that $\sigma$ is an isomorphism outside of $\widetilde{E}$)} such that the transform $\widetilde{\theta}$ of $\theta$ has $n$ \emph{monomial} first integrals \emph{adapted} to {$\widetilde{E}$}, {More precisely,} at every point $p$ in $\widetilde{M}$, there exists a coordinate system $\pmb{x} = (x_1,\ldots,x_m)$ and $n$ first integrals $\pmb{x}^{\pmb{\alpha_1}}, \ldots, \pmb{x}^{\pmb{\alpha_n}}$, where $\pmb{x}^{\pmb{\alpha_i}} := x_1^{\alpha_{i,1}} \cdots x_m^{\alpha_{i,m}}$ such that:
\begin{enumerate}
\item $supp \, \widetilde{E} = \{x_1 \cdots x_l=0\}$ for some $l$;
\item the multiindexes $\pmb{\alpha_1},\ldots,\pmb{\alpha_n} \in \mathcal{K}^m$ are linearly independent over $\mathbb{K}$.
\end{enumerate}
\label{prb:main}
\end{problem}

The above problem can be strengthened by asking that $\sigma$ be a composite of blowings-up with smooth admissible centers (\emph{admissible} means that each center of blowing-up has only normal crossings with the exceptional divisor). In this case we can write $\sigma: (\widetilde{M},\widetilde{E}) \to (M,E)$, where $\widetilde{E}$ is the union of the strict transform of $E$ with the exceptional divisors of each blowing up.

A positive solution of problem \ref{prb:main} {seems to have} applications to pseudo-abelian integrals, an important technique used to estimate the number of limit cycles which bifurcate from a {Darboux} planar vector-field \cite{Pavao,Nov}. It has been suggested in \cite{Aymen} (and {in a} personal communication from Pavao Mardesic) that a positive answer to Problem \ref{prb:main}, mixed with the techniques from \cite{Pavao,AymenT}, could lead to new results about non-generic pseudo-abelian integrals.

In the present work we present a \emph{local} reduction of first integrals (i.e., a local solution of problem \ref{prb:main}). By local, we mean that we accept admissible \emph{local blowings up}, i.e. the composition of an admissible blowing-up with an open immersion (e.g. a chart of a blowing up). More precisely:

\begin{theorem}\label{thm:main}
Let $M$ be a non-singular analytic manifold, $E$ be a simple normal crossing divisor on $M$, $p$ be a point of $M$ and $\theta$ be a singular distribution with $n$ first integrals of $\mathcal{K}$-Darboux type $(f_1, \ldots, f_n)$ over $p$ such that $d f_1 \wedge ... \wedge d f_n \not\equiv 0$. Then there exists a finite collection of morphisms $\tau_i : (M_i,E_i) \to (M,E)$ such that:
\begin{enumerate}
 \item The morphism $\tau_i$ is a finite composition of admissible local blowing-ups;
 \item There exists a compact set $K_i \subset M_i$ such that $\bigcup \Phi_i(K_i)$ is a compact neighborhood of $p$;
 \item The strict transform $\theta_i$ of the singular distribution $\theta$ (by $\tau_i$) has $n$ monomial first integrals adapted to $E_i$.  Furthermore, if $\mathcal{K} = \mathbb{Q}$, then the $n$ monomial first integrals are analytic.
\end{enumerate}
In other words, Problem \ref{prb:main} admits a positive solution by local blowings-up.
\end{theorem}

\begin{example}
Consider the case when $dim \, M=3$ and $\theta$ is generated by two analytic first integrals $f_1$ and $f_2$. Then, Theorem \ref{thm:main} states that the singularities of $\theta$ can be reduced to $\mathbb{Q}$-\emph{monomial} singularities (see definition \ref{def:MSD}) which are locally equal to one of the following three forms:
\begin{enumerate}
\item $\theta$ is generated by the regular vector field $\partial_x$, i.e. $(f_1,f_2)=(y,z)$.
\item $\theta$ is generated by the linear vector field $\alpha_1 y\partial_y - \alpha_2 z\partial_z$ with $(\alpha_1,\alpha_2)\in \mathbb{N}^2$, i.e. $(f_1,f_2)=(x,y^{\alpha_2}z^{\alpha_1})$.
\item $\theta$ is generated by the linear vector field $\alpha_1 x\partial_x - \alpha_2y\partial_y - \alpha_3z\partial_z$  with $(\alpha_1,\alpha_2,\alpha_3)\in \mathbb{N}^3$, i.e. $(f_1,f_2)=(x^{\alpha_2}y^{\alpha_1},x^{\alpha_3}z^{\alpha_1})$.
\end{enumerate}
\label{ex:1}
\end{example}

\subsection{Simultaneous resolution of singularities and monomialization of morphisms}

The originality of this result comes from the fact that Problem \ref{prb:main} does not follow {in an evident way from} resolution of singularities of varieties (e.g. \cite{BM,Vil4}). For the following discussion, let us assume that the $n$ first integrals $f_1, \ldots, f_n$ of $\theta$ are all analytic. In this case, one could try to solve Problem \ref{prb:main} by \emph{simultaneous resolution of singularities}, { that is,} by principalization of the ideal generated by $\Pi_{i=1}^n f_i \Pi_{i<j}(f_i-f_j)$. At the end of the process, the pull-back of each germ $f_i$ is locally given by a monomial in the exceptional divisor times a unit. But this is not enough, as we can see in the following example:
\begin{example}
\label{ex:main}
In a three dimensional manifold, consider two first integrals
\[
f_1 = x^2+y^2 , \quad f_2 = x^4+y^4+ y^2z^2+z^4
\] 
and the ideal $I = f_1f_2(f_1-f_2)$. After blowing-up the origin consider the origin of the $x$-chart with coordinate system $(x,y,z)= (u,uv,uw)$. In this chart:
\[
f_1\circ \sigma = u^2(1+v^2) , \quad f_2\circ \sigma = u^4(1+v^4+ v^2w^2+w^4)
\]
and the pulled-back ideal $I^{\ast}$ is principal. Nevertheless, we can not absorb the units of $f_1\circ \sigma$ and $f_2\circ \sigma$ into the monomials $u^2$ and $u^4$ simultaneously. In particular, the foliation generated by these first integrals is not topologically equivalent to one of the three cases in example \ref{ex:1}.
\end{example}

To solve Problem \ref{prb:main}, we need to ``monomialize the sub-ring" $(f_1,\ldots,f_n)$ instead of the ideal. In order to do so, we combine techniques developed for resolution of singularities \emph{subordinate to foliations} \cite{Bel2} and \emph{monomialization of morphisms} \cite{Cut3}. It is worth mentioning that, if the first integrals are all analytic, then {the preprint of Cutkosky on} local monomialization of analytic morphisms \cite{CutA} {provides a different proof}.

\subsection{Overview of the proof}
The proof follows by induction on the leaf dimension of an auxiliary singular distribution $\omega$. More precisely, in subsection \ref{ssec:FSR}, we introduce the notion of \emph{foliated $\mathcal{K}$-Darboux data} $(M,\omega,\mathcal{D},E)$, where $\omega$ is a singular distribution and $\mathcal{D}$ is given by the {(complex multi-valued)} first integrals $(f_1,\ldots,f_n)$. The singular distribution $\omega$ is an auxiliary singular distribution that contains $\theta$ and is $\mathcal{K}$-\emph{monomial}, i.e. it (almost) satisfies the thesis of Problem \ref{prb:main} (see definition \ref{def:MSD} and Lemma \ref{lem:Fi}). In particular, if the codimension of $\omega$ is ${n}$, then $\omega$ (and consequently $\theta$) have $n$ monomial first integrals. {Furthermore}, if $\mathcal{K} \subset \mathbb{R}$, we {make extra blowings-up} in order to guarantee that the monomial first integrals do not have poles (Lemma \ref{lem:Final}).

In order to decrease the leaf dimension of $\omega$, we prove the existence of a ``monomialization" {of (a codimension one foliation associated to)} $\mathcal{D}$ which preserves the class of monomial singularities {of $\omega$} (Theorem \ref{thm:main2}). More precisely, apart from local blowings up, we can assume that there exists $f\in \mathcal{D}$ such that $f$ is a monomial which is \emph{not} a first integral of $\omega$ (see Lemma \ref{lem:invariantzerodone}). This allow us to construct a new foliated Darboux data $(M,\omega',\mathcal{D},E)$ where, at each point $p \in M$, the stalk $\omega'_p$ is given by $\{\partial \in \omega; \partial(f)\equiv 0  \}$. The codimension of $\omega'$ is strictly bigger than the codimension of $\omega$ and, moreover, we prove that $\omega'$ is also a $\mathcal{K}$-monomial singular distribution (Lemma \ref{lem:invariantzerodone}). So, we start with $\omega=Der_M(-logE)$ and we repeat the process until the codimension of $\omega$ is ${n}$ (see details in section \ref{sec:Overview1}).

The proof of Theorem \ref{thm:main2} is technically the hardest part of the paper. As a first step, we need to guarantee that the transform of $\omega$ under the necessary blowings up are going to be $\mathcal{K}$-monomial. This is not true if we are not careful with the blowings up we perform:

\begin{example}
\label{ex:2}
Consider a three dimensional regular variety and the singular distribution $\omega = (\partial_x + x\partial_z)$, which is a regular (and, therefore, monomial) singular distribution. Let us consider the blowing-up with center $\mathcal{C}=V(y,z)$. In the $y$-chart $(x,y,z)=(u,v,vw)$, the transform of $\omega$ is generated by $v\partial_{u} + u\partial_{w} $. Note that the linear part of this vector field is nilpotent and, therefore, the strict transform of $\omega$ is not monomial (nor log-canonical).
\end{example}

The example {suggests} that we {may want} to impose some restriction to the centers of blowing up. More precisely, in section \ref{sec:tadm}, we recall the notion of ${\omega}$\textit{-admissible} blowings-up (see definition \ref{def:tadm}), which was first introduced in \cite{Bel2,BeloT}. This kind of blowings-up preserve the class of $\mathcal{K}$-monomial singularities (Proposition \ref{prop:AdmCenter}). Further results about ${\omega}$-admissible blowings-up which are necessary (Proposition \ref{prop:AdmCenter} and Theorems \ref{thm:RinvI} and \ref{thm:RI}) are enunciated in section \ref{sec:tadm} and, for shortness, we refer to \cite{Bel2} for their proofs. Finally, we only need to find a ``monomialization" {of (a codimension one foliation associated to)} $\mathcal{D}$ by ${\omega}$-admissible blowings-up in order to prove Theorem \ref{thm:main2}.

The proof of Theorem \ref{thm:main2} now follows by induction on an invariant $\nu$ associated with a foliated Darboux data (see definition \ref{def:invariant}). The induction has three main steps (as in \cite{AB,Bel2}, c.f \cite{Cut3}) which are presented in details in section \ref{sec:Overview} and proved in sections \ref{sec:Infinte}, \ref{sec:prep} and \ref{sec:drop}. Technically, the main difficulty is that the invariant $\nu$ has no \emph{surface of maximal contact} associated to it, i.e. we can not use the standard ideas of Hironaka in order to argue by induction on the dimension of $M$. In order to deal with this issue, we blow-up centers which are \emph{not} necessarily contained in the maximal locus of $\nu$ (as in \cite{Bel2,Cut3}). This allow us to emulate the existence of a surface of maximal contact and to control the transforms of the foliated Darboux data. Nevertheless, we need to choose an special direction at each step, which means that the algorithm is only local instead of global. Finally, it is worth remarking that one can globalize the algorithm in dimension three (c.f. \cite{AB,Cut3}).

\section{Notation and background}
 
\subsection{Singular distributions} Let $Der_M$ denote the sheaf of analytic vector fields on $M$, i.e. the sheaf of analytic sections of $TM$. An {\em involutive singular distribution} is a coherent {subsheaf} ${\omega}$ of $Der_M$ such that, for each point $p$ in $M$, the stalk ${\omega}_p:={\omega} \cdot \mathcal{O}_p$ is closed under the Lie bracket operation. Consider the quotient sheaf $Q = Der_M/ {\omega}$.  The {\em singular set} of ${\omega}$ is defined by the closed analytic subset $S({\omega}) = \{p \in M : Q_p \text{ is not a free $\mathcal{O}_p$ module}\}$. A singular distribution ${\omega}$ is called \textit{regular} if $S({\omega})=\emptyset$. On $M \setminus S({\omega})$ there exists an unique analytic subbundle $L$ of $TM |_{ M\setminus S({\omega})}$ such that ${\omega}$ is the sheaf of analytic sections of $L$. We assume that the dimension of the $\mathbb{K}$ vector space $L_p$ is the same for all points $p$ in $M \setminus S$ (this always holds if $M$ is connected). It is called the {\em leaf dimension} of ${\omega}$ and denoted by $d$. In this case ${\omega}$ is called an involutive \textit{$d$-singular distribution}.\\
\\
Let $Der_M(-logE)$ denote the coherent {subsheaf} of $Der_M$ {given by} all derivations tangent to $E$, that is, {for each point $p \in M$}, a derivation $\partial$ is in $Der_M(-logE)\cdot \mathcal{O}_p$ if and only if {$\partial[\mathcal{I}_E] \subset \mathcal{I}_E$, where $\mathcal{I}_E$ is the reduced ideal sheaf whose support is $E$}. A singular distribution ${\omega}$ which is also a sub sheaf of $Der_M(-logE)$ is said to be tangent to $E$.

\subsection{Local blowings-up and {complete} collection of local blowings-up}
\label{ssec:CLB} 
A blowing-up $\sigma: \widetilde{M} \to M$ is said to be \emph{admissible} if the center of blowing-up $\mathcal{C}$ has normal crossings with $E$. In this case, we denote the blowing-up by $\sigma : (\widetilde{M},\widetilde{E}) \to (M,E)$, where $\widetilde{E}$ is the union of the inverse-image of $E$ with the exceptional divisor $F$ of the blowing-up.\\
\\
An admissible \textit{local} blowing-up $\tau: (\widetilde{M},\widetilde{E}) \to (M,E)$ is the composition of an admissible blowing with an open immersion (e.g. a chart of the blowing-up). A \emph{sequence of local blowings-up} is a sequence of morphisms
\[
 \begin{tikzpicture}
  \matrix (m) [matrix of math nodes,row sep=3em,column sep=3em,minimum width=1em]
  {(M_r,E_r) & \cdots & (M_0,E_0)\\};
  \path[-stealth]
    (m-1-1) edge node [above] {$\tau_r$} (m-1-2)
    (m-1-2) edge node [above] {$\tau_1$} (m-1-3);
\end{tikzpicture}
\]
where each morphism is an admissible local blowing-up. A \emph{{complete} collection of local blowings-up $\tau_i$} at a point $p \in M$ is a finite collection of morphisms $\tau_i : (M_i,E_i) \rightarrow (M,E)$ such that:
\begin{enumerate}
 \item The morphism $\tau_i$ is a finite composition of admissible local blowing-ups. 
 \item There exists compact {sets} $K_i \subset M_i$ such that $\bigcup \tau_i(K_i)$ is a compact neighborhood of $p$.
\end{enumerate}

\subsection{Blowing-up of a singular distributions}
A \textit{foliated manifold} is the triple $(M,{\omega},E)$ where $M$ is a real- or complex-analytic regular manifold, $E$ is a \emph{simple normal crossing divisor} (i.e., an ordered collection $E = (E^{(1)},...,E^{(l)})$, where each $E^{(i)}$ is a smooth divisor on $M$ such that $\sum_i E^{(i)}$ is a reduced divisor with simple normal crossings) and ${\omega}$ is an involutive singular distribution tangent to $E$ (i.e ${\omega} \subset Der_M(-logE)$).\\
\\
Given an admissible blowing-up $\sigma : (\widetilde{M},\widetilde{E}) \to (M,E)$, we denote by $\widetilde{{\omega}}$ the intersection of {the transform} of ${\omega}$ with $Der_{\widetilde{M}}(-log\widetilde{E})$. In particular, this guarantees that $(\widetilde{M},\widetilde{{\omega}},\widetilde{E})$ is a foliated ideal sheaf and we can write $\sigma : (\widetilde{M},\widetilde{{\omega}},\widetilde{E}) \to (M,{\omega},E)$.

\subsection{Foliated Darboux-data}
\label{ssec:FSR}

We recall that the base field $\mathbb{K} = \mathbb{R}$ or $\mathbb{C}$. Let $\mathcal{K}$ be a subfield {of $\mathbb{K}$}.
\begin{definition}
A \textit{foliated $\mathcal{K}$-Darboux data} is a quadruple $(M,{\omega},\mathcal{D},E)$ where $\mathcal{D}$ is a fixed collection of $n$ \emph{complex} {multi-}valued functions $(f_1,\ldots, f_n)$ of $\mathcal{K}$-Darboux type, that is
\[
f_{i} = \prod_j  g_{i,j}^{k_{i,j}}, \quad {k_{i,j} \in \mathcal{K} \text{ and } g_{i,j} \in \mathcal{O}_p, \text{ for } i=1, \ldots, n}
\]
{(see remark \ref{rk:DarbouxFI})} globally defined on $M$ such that $ df_1 \wedge \dots \wedge d f_n \not\equiv 0$.
\end{definition}

A foliated $\mathcal{K}$-Darboux data $(M,{\omega},\mathcal{D},E)$ is said to be \textit{trivial} at a point $p$ if all functions $f_i$ in $\mathcal{D}$ are first integrals of ${\omega}$, i.e $\partial(f_i) \equiv 0 $ for all $\partial \in {\omega}_p$.\\
\\
Given an admissible blowing-up $\sigma: (\widetilde{M},\widetilde{{\omega}},\widetilde{E}) \to (M,{\omega},E)$, we denote by $\widetilde{D}$ the total transform of $\mathcal{D}$, i.e. $\widetilde{D} = \sigma^{\ast}\mathcal{D} = (f_1 \circ \sigma,\ldots, f_n\circ \sigma)$. In particular, this guarantees that $(\widetilde{M},\widetilde{{\omega}},\widetilde{D},\widetilde{E})$ is a foliated $\mathcal{K}$-Darboux data and we can write $\sigma : (\widetilde{M},\widetilde{{\omega}},\widetilde{\mathcal{D}},\widetilde{E}) \to (M,{\omega},\mathcal{D},E)$.

\subsection{Compact Notation}

In sections \ref{sec:mon} and \ref{sec:drop} it will be convenient to have a compact notation for denoting a collection of monomials. To this end, let $\pmb{u}$ be a collection of $k$ functions $(u_1,\dots,u_k)$ and $\pmb{A}$ be a $t \times k$ matrix:
\[
 \pmb{A}=\begin{bmatrix}
    \pmb{\alpha_1}\\
    \vdots \\
    \pmb{\alpha_t}
   \end{bmatrix}=
   \begin{bmatrix}
    \alpha_{1,1} & \dots & \alpha_{1,k}\\
    \vdots & \ddots &\vdots\\
    \alpha_{t,1} & \dots & \alpha_{t,k}
   \end{bmatrix}
\]
We define:
\[
 \umon{A}:=\begin{bmatrix}
    \umon{\pmb{\alpha_1}}\\
    \vdots \\
    \umon{\pmb{\alpha_t}}
   \end{bmatrix}=
   \begin{bmatrix}
    u_1^{\alpha_{1,1}} \cdots  u_k^{\alpha_{1,k}}\\
    \vdots \\
    u_1^{\alpha_{t,1}}  \cdots  u_k^{\alpha_{t,k}}
   \end{bmatrix}
\]
\begin{lemma}
Let $\pmb{u} = (u_1, \dots, u_k)$ and $\pmb{x} = (x_1, \dots x_r)$ be two collections of functions such that $\pmb{u} = \xmon{B}$ for some $k\times r$ matrix $\pmb{B}$. Then, for any $t \times k$ matrix $\pmb{A}$, we have that $\umon{A} = \xmon{ AB}$.
\label{lem:CompForm}
\end{lemma}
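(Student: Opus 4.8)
The plan is to verify the identity one row at a time. Since $\umon{A}$ and $\xmon{AB}$ are both columns of $t$ monomials, it suffices to show that, for each $j \in \{1, \dots, t\}$, the $j$-th entries agree, i.e. $\umon{\boldsymbol{\alpha_j}} = \xmon{\boldsymbol{\gamma_j}}$, where $\boldsymbol{\gamma_j}$ denotes the $j$-th row of the product matrix $\boldsymbol{AB}$. This removes the matrix-of-monomials notation from the picture and reduces everything to a single monomial identity.

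For a fixed $j$, I would begin from the definition $\umon{\boldsymbol{\alpha_j}} = \prod_{i=1}^{k} u_i^{\alpha_{j,i}}$, substitute the hypothesis $u_i = \xmon{\boldsymbol{\beta_i}} = \prod_{s=1}^{r} x_s^{\beta_{i,s}}$ (with $\boldsymbol{\beta_i}$ the $i$-th row of $\mathbf{B}$), apply the exponent law $\bigl(\prod_{s} x_s^{\beta_{i,s}}\bigr)^{\alpha_{j,i}} = \prod_{s} x_s^{\alpha_{j,i}\beta_{i,s}}$, and then interchange the two finite products. This yields $\umon{\boldsymbol{\alpha_j}} = \prod_{s=1}^{r} x_s^{\,\sum_{i=1}^{k} \alpha_{j,i} \beta_{i,s}}$, and since $\sum_{i=1}^{k}\alpha_{j,i}\beta_{i,s}$ is precisely the $s$-th coordinate of the $j$-th row $\boldsymbol{\gamma_j}$ of $\boldsymbol{AB}$, the right-hand side is $\xmon{\boldsymbol{\gamma_j}}$. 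Assembling the $t$ rows gives $\umon{A} = \xmon{AB}$.

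There is essentially no genuine obstacle here — the argument is pure bookkeeping with exponents. The only point worth a word of care is that $\mathbf{u}$ and $\mathbf{x}$ are analytic functions rather than indeterminates, so the identity should be read either formally (as an identity between monomial \emph{expressions}, which is all that is used in the sequel) or, if the entries of $\mathbf{A}$ or $\mathbf{B}$ are permitted to be negative integers, on the open set where the relevant $x_s$ are non-vanishing; in both readings the chain of equalities above is literally the same. Equivalently, one may observe that $\boldsymbol{\alpha} \mapsto \xmon{\boldsymbol{\alpha}}$ intertwines addition of integer row vectors with multiplication of (Laurent) monomials, and that the hypothesis $\mathbf{u} = \xmon{B}$ says exactly that the analogous assignment for $\mathbf{u}$ is the composite of this one with $\boldsymbol{\alpha} \mapsto \boldsymbol{\alpha}\mathbf{B}$; but the direct calculation above is shorter and is the one I would write down.
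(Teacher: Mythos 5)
Your proposal is correct and follows essentially the same route as the paper: substitute $u_i = \xmon{\boldsymbol{\beta_i}}$ into each monomial $\umon{\boldsymbol{\alpha_j}}$, apply the exponent laws, and recognize the resulting exponent $\sum_i \alpha_{j,i}\beta_{i,s}$ as an entry of $\mathbf{AB}$ — the paper simply carries out this computation for all $t$ rows simultaneously in matrix notation rather than row by row. Your remark about reading the identity formally or on the locus where the $x_s$ do not vanish is a reasonable extra precaution but not something the paper dwells on.
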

\begin{proof}
 Indeed, let $\pmb{\beta_i}$ be the line vectors of $\pmb{B}$, i.e
$
  \pmb{B}= \begin{bmatrix}
    \pmb{\beta_1}\\
    \vdots \\
    \pmb{\beta_k}
   \end{bmatrix}
$. Note that, by definition $u_i = \xmon{\beta_i}$, which implies that:
 \[
  \umon{A} =  \begin{bmatrix}
    u_1^{\alpha_{1,1}} \cdots  u_k^{\alpha_{1,k}}\\
    \vdots \\
    u_1^{\alpha_{t,1}}  \cdots  u_k^{\alpha_{t,k}}
   \end{bmatrix} = \begin{bmatrix}
    \pmb{x}^{\alpha_{1,1} \pmb{\beta_1}} \cdots 
    \pmb{x}^{\alpha_{1,k} \pmb{\beta_k}}\\
    \vdots \\
    \pmb{x}^{\alpha_{t,1} \pmb{\beta_1}} \cdots 
    \pmb{x}^{\alpha_{t,k} \pmb{\beta_k}}
   \end{bmatrix}=
   \begin{bmatrix}
   \Pi^r_{i=1} x_i^{\sum^{k}_{j=1} \alpha_{1,j}\beta_{j,i}}\\
    \vdots \\
    \Pi^r_{i=1} x_i^{\sum^k_{j=1} \alpha_{t,j}\beta_{j,i}}
   \end{bmatrix} =\xmon{C}
 \]
where
\[
 \pmb{C} =  \begin{bmatrix}
 \sum^{k}_{j=1} \alpha_{1,j}\beta_{j,1} & \dots &  \sum^{k}_{j=1} \alpha_{1,j}\beta_{j,r} \\
 \vdots & \ddots &\vdots\\
  \sum^{k}_{j=1} \alpha_{t,j}\beta_{j,1} & \dots &  \sum^{k}_{j=1} \alpha_{t,j}\beta_{j,r}
   \end{bmatrix}
\]
which is equal to $\pmb{A B}$.
\end{proof}

\section{Monomial singular distribution}
\label{sec:mon}

Let $\mathcal{K}$ be a {subfield of $\mathbb{K}$}.

\begin{definition}[Monomial singular distribution]
\label{def:MSD}
Given a foliated manifold $(M,{\omega},\allowbreak E)$, we say that the singular distribution ${\omega}$ is $\mathcal{K}$-\textit{monomial} at a point $p$ if there exists set of generators $\{\partial_1,...,\partial_d\}$ of ${\omega} \cdot \mathcal{O}_p$ and a coordinate system $(\pmb{u},\pmb{w}) = (u_1,\ldots,  u_r,w_{r+1},\allowbreak \ldots, w_m)$ centered at $p$ such that:

\begin{itemize}
\item[(i)] Locally $E = \{u_1 \cdots u_l=0\}$, for some $l\leq r$;
\item[({ii})] The vector fields $\partial_i$ are of the form:
\[
 \begin{aligned} 
  \partial_i &= \sum^{r}_{j=1} \alpha_{i,j}u_j \partial u_j \text{ } i=1, \ldots, s:=d+r-m \text{, and}\\
  \partial_i &=  \partial w_{r-s+i} \text{ }i = s+1, \ldots, d 
 \end{aligned}
\]
where $\alpha_{i,j} \in \mathcal{K}$ and $s$ is the number of vector fields in $\{\partial_1, \ldots,\partial_d\}$ which are singular.
\item[({iii})] If $\omega' \subset Der_M(-logE)$ is an involutive $d$-singular distribution such that ${\omega} \subset \omega'$, then ${\omega} = \omega'$.
\end{itemize}

In this case, we say that $(\pmb{u},\pmb{w})$ is a \textit{monomial coordinate system} and that $\{\partial_1,...,\partial_d\}$ is a \textit{monomial basis} of ${\omega}_p$.
\end{definition}
 
\begin{remark}[Geometrical Interpretation of $(iii)$]
Assuming conditions $[i-ii]$ above, Property $[iii]$ implies that the singularity set of ${\omega}$ is of codimension at least two outside of the exceptional divisor $E$.
\label{rk:Condiv}
\end{remark}
 
\begin{notation}[Monomial coordinate system]
We sometimes need to distinguish one of the non-exceptional coordinates $\pmb{w}$. To this end, we denote by $(\pmb{u},v,\pmb{w}) = {\allowbreak(u_1,\ldots,  u_r,v,w_{r+2},\allowbreak \ldots, w_m)}$ a monomial coordinate system where the vector field $\partial_{v}$ is always assumed to be contained in ${\omega}_p$.
\end{notation}
 
The importance of this class of singular distributions for our propose is enlightened by the following result:
 
\begin{lemma}[Monomial first integrals]
\label{lem:Fi}
Given a foliated manifold $(M,{\omega},E)$, the singular distribution ${\omega}$ is $\mathcal{K}$-monomial if and only if for any monomial coordinate system $(\pmb{u},\pmb{w}) = (u_1,\ldots,  u_r,w_{r+1}, \ldots w_m)$ centered at $p$, there exists $m-d$ (complex {multi-}valued) monomials $\umon{B} = (\umon{\beta_1},\dots,\umon{\beta_{m-d}})$, where the matrix $\pmb{B}$ has maximal rank and entries in $\mathcal{K}$, such that
 \[
{\omega}_p =\{\partial \in Der_p(-log\, E);\text{ } \partial(\umon{\beta_i})\equiv 0 \text{ for all } i\}
\]
In this case, we call $\umon{B}$ a complete system of first integrals.
\end{lemma}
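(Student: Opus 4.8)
The plan is to prove both implications by working in a monomial coordinate system and using the explicit form of a monomial basis given in Definition \ref{def:MSD}(iii). For the forward direction, suppose $\theta$ is monomial; fix an arbitrary monomial coordinate system $(\mathbf{u},\mathbf{w}) = (u_1,\ldots,u_k,w_{k+1},\ldots,w_m)$ and a monomial basis $\{\partial_1,\ldots,\partial_d\}$ adapted to it. After re-indexing, $d - (m-k)$ of the generators are of the form $\partial_i = \sum_{j=1}^k \alpha_{i,j} u_j \partial u_j$ and the remaining $m-k$ are of the form $\partial w_s$ for the $w$-coordinates that appear (so that in fact all of $\mathbf{w}$ except possibly some are killed). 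Collect the exponent vectors $\alpha_i$ into a matrix; since the $\partial_i$ are linearly independent (they form a basis of a rank-$d$ sheaf), this ``torus part'' has rank $d-(m-k)$ as a matrix over $\mathbb{Q}$. I would then choose $m-d$ multi-indices $\boldsymbol{\beta_1},\ldots,\boldsymbol{\beta_{m-d}}$ supported on the $u$-variables spanning a complement, i.e.\ with $\alpha_i \cdot \beta_t = 0$ for all $i$ in the torus part; such $\beta_t$ exist and can be taken with rational (hence, after clearing denominators, integer) entries, and the resulting matrix $\mathbf{B}$ — padded with zero columns in the $\mathbf{w}$-slots — has rank $m-d$. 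A direct computation gives $\partial_i(\umon{\beta_t}) = (\alpha_i\cdot\beta_t)\,\umon{\beta_t} = 0$ for the torus generators and $\partial_i(\umon{\beta_t}) = 0$ for the $\partial w_s$ generators (since $\beta_t$ involves no $w$-variables), so each $\umon{\beta_t}$ is a first integral. It remains to check that $\theta.\mathcal{O}_p$ is exactly the set of logarithmic derivations annihilating all the $\umon{\beta_t}$: the inclusion $\subseteq$ is what we just showed, and for $\supseteq$ one writes a general logarithmic derivation $\partial = \sum_j a_j u_j \partial u_j + \sum_s b_s \partial w_s$, computes $\partial(\umon{\beta_t}) = (\sum_j a_j \beta_{t,j})\umon{\beta_t}$, and uses that the $\beta_t$ span the annihilator of the torus part to conclude that the $u$-part of $\partial$ lies in the span of the torus generators; hence $\partial \in \theta.\mathcal{O}_p + \langle \partial w_s : s\rangle$. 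Here the subtle point, and the place where condition (iv) of Definition \ref{def:MSD} enters, is that one needs the $\partial w_s$ appearing in $\partial$ to be exactly those already in the monomial basis — i.e.\ that $\theta$ is saturated as a logarithmic $d$-distribution; the annihilator of $\umon{B}$ inside $Der_p(-\log E)$ is a priori a $d$-distribution containing $\theta$, so by (iv) it equals $\theta$. This is both the cleanest way to close the argument and the step I expect to be the main obstacle, since it requires checking that the annihilator really is a $d$-distribution (coherent, involutive, of leaf dimension $d$) rather than something larger.

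For the converse, suppose that in some monomial coordinate system there exist $m-d$ monomials $\umon{B}$ with $\mathbf{B}$ of maximal rank $m-d$ such that $\theta.\mathcal{O}_p$ equals the logarithmic derivations annihilating all of them. I would like to conclude that $\theta$ is monomial, but note the statement only asserts the equivalence in the form ``monomial $\iff$ for \emph{any} monomial coordinate system there exist such $\umon{B}$''; so strictly the converse I must prove is: if for \emph{some} (equivalently, by the phrasing, the property is being used as a characterization) such $\umon{B}$ exist with the displayed equality, then $\theta$ is monomial — but since the hypothesis already presupposes a monomial coordinate system exists, what actually needs proving is that the annihilator distribution is monomial \emph{with the given basis structure}, i.e.\ that it admits a monomial basis. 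For this I would take $\mathbf{B}$, row-reduce over $\mathbb{Q}$, and produce $d$ logarithmic vector fields annihilating all $\umon{\beta_t}$: namely $m-k$ fields $\partial w_s$ for the $w$-coordinates not constrained (here one uses that $\mathbf{B}$, having rank $m-d$ and $m-k \le $ number of $w$-variables by the coordinate setup, leaves room), together with $d-(m-k)$ torus fields $\sum_j \alpha_{i,j} u_j\partial u_j$ where the $\alpha_i$ span the rational annihilator of the $u$-columns of $\mathbf{B}$. One checks these are linearly independent, involutive (torus fields commute, and commute with coordinate fields), tangent to $E = \{u_1\cdots u_l = 0\}$, and annihilate every $\umon{\beta_t}$; hence they generate a sub-distribution of $\theta.\mathcal{O}_p$ of rank $d$, which by the leaf-dimension hypothesis on $\theta$ must be all of $\theta.\mathcal{O}_p$. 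Then $\{\partial_1,\ldots,\partial_d\}$ is by construction a monomial basis and the same coordinate system is a monomial coordinate system, verifying (i)--(iv) of Definition \ref{def:MSD} — condition (iv) holding precisely because $\theta$ was defined as the \emph{full} annihilator inside $Der_p(-\log E)$.

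Throughout, Lemma \ref{lem:CompForm} is the book-keeping tool that lets one pass between the monomials $\umon{B}$ and their expressions in any other coordinate system, which is what makes the phrase ``for any monomial coordinate system'' legitimate: if $(\mathbf{u}',\mathbf{w}')$ is another monomial coordinate system then $\mathbf{u}' = \umon{D}$ (times units) for an invertible exponent matrix $D$ on the toric part, and replacing $\mathbf{B}$ by $\mathbf{B}D^{-1}$ (extended by zeros on the $\mathbf{w}$-part) transports a complete system of first integrals from one to the other while preserving maximal rank. I would state this transport step as a short sublemma and then it suffices to prove the main equivalence in one fixed monomial coordinate system. In summary, the proof is: (1) reduce to a fixed monomial coordinate system via Lemma \ref{lem:CompForm}; (2) for ``$\Rightarrow$'', build $\mathbf{B}$ from a rational complement of the toric exponent matrix and verify the annihilator equals $\theta$ using (iv); (3) for ``$\Leftarrow$'', build a monomial basis from a rational complement of the columns of $\mathbf{B}$ and verify it generates $\theta$ using the leaf dimension $d$. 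The delicate accounting is always the interplay between ``rank $d$ sub-distribution'' and ``equals $\theta$'', which is exactly what conditions (iii) and (iv) of Definition \ref{def:MSD} are designed to handle.
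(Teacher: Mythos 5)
Your forward direction is essentially the paper's argument: compute $\partial_i(\umon{\beta})$ against a monomial basis, solve the linear conditions $\sum_j\alpha_{i,j}\beta_j=0$ over $\mathbb{Q}$ to get $m-d$ integer exponent vectors, and close the inclusion with the maximality condition (iv). That part is fine (the paper too simply invokes (iv) at this point). The genuine gap is in your converse. You build the $\partial_{w_s}$'s together with torus fields $\sum_j\alpha_{i,j}u_j\partial_{u_j}$ spanning the rational annihilator of the columns of $\boldsymbol{B}$, and then conclude that, being a rank-$d$ sub-distribution of $\theta$, they must generate all of $\theta.\mathcal{O}_p$ ``by the leaf-dimension hypothesis''. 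That inference is false: a sub-distribution of the same generic rank need not equal the ambient one as an $\mathcal{O}_p$-module (compare $\langle u\,\partial_u\rangle\subsetneq\langle\partial_u\rangle$). Concretely, take $m=k=2$, $d=1$, $E=\{u_1=0\}$ and the single first integral $u_1$, so $\theta=\{Z\in Der_p(-\log E):Z(u_1)\equiv 0\}=\langle\partial_{u_2}\rangle$. Your recipe produces the torus field $u_2\partial_{u_2}$, which generates a proper rank-one subsheaf of $\theta$; equality fails, and the correct monomial basis is obtained only after recognizing that $u_2$ must be re-labeled as a $w$-coordinate. This re-labeling/divisibility dichotomy (either $u_j$ divides the coefficient $A_j$, or $\beta_{i,j}=0$ for all $i$ and $u_j$ is either re-namable as a $w$-coordinate or is exceptional) plus the Taylor-coefficient computation showing that every $Z$ in the annihilator is an $\mathcal{O}_p$-combination of the explicit fields is precisely the content of the paper's proof of the converse, and it cannot be replaced by a rank count. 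The same module-level computation is also what would be needed if you wanted to prove your forward inclusion $\supseteq$ directly instead of via (iv).

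Two smaller points. First, your proposed ``transport sublemma'' — that any two monomial coordinate systems are related by a toric substitution $\mathbf{u}'=\umon{D}$ times units — is unjustified (monomial coordinate systems for the same $\theta$ need not be monomially related); fortunately it is also unnecessary, since the forward argument can be run verbatim in an arbitrary monomial coordinate system, which is what the paper does. Second, when you write a general logarithmic derivation as $\sum_j a_ju_j\partial_{u_j}+\sum_s b_s\partial_{w_s}$ you are implicitly assuming every $u_j$ is exceptional; for the non-exceptional $u_j$'s the coefficient need not be divisible by $u_j$, and handling exactly this case is where the paper's argument (and the counterexample above) lives.
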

\begin{proof}
First, let us assume that ${\omega}$ is a $\mathcal{K}$-monomial singular distribution and let us fix a point $p$ in $M$ and a monomial coordinate system $(\pmb{u},\pmb{w})$. Note that if $f$ is a first integral of ${\omega}$, then it can not depend on any coordinate $w$, since all the derivations $\partial_{w_i}$ are contained in the stalk ${\omega}_p$. So, consider a monomial $\umon{\beta}$ and let us remark that:
\[
 \begin{aligned}
  \partial_i(\umon{\beta}) &\equiv \umon{\beta} \sum^{r}_{j=1} \alpha_{i,j}\beta_i \text{ for }i=1,\ldots, s \text{, and}\\
   \partial_i(\umon{\beta}) &\equiv 0\text{, otherwise}
 \end{aligned}
\]
So, the monomial $\umon{\beta}$ is a first integral of ${\omega}$ if, and only if:
\begin{equation}
 \sum^{r}_{j=1} \alpha_{i,j}\beta_i = 0 \text{, for }i = 1, \dots, s
 \label{eq:L1}
\end{equation}
Thus, there exists a $r- s = r-(d+r-m) =  m-d$ linear subspace $L$ of $\mathcal{K}^{r}$ that contains all vector $\pmb{\beta}$ satisfying the equations $\ref{eq:L1}$. In particular, we can choose a system of generators $\{\pmb{\beta_1}, \dots, \pmb{\beta_{m-d}}\}$ of $L$. So, the $m-d$ monomials $\umon{B} = (\umon{\beta_1},\dots,\umon{\beta_{m-d}})$ are first integrals of ${\omega}$ and:
\[
 {\omega}_p \subset \{\partial \in Der_p(-logE);\text{ } \partial(\umon{\beta_i})\equiv 0 \text{ for all } i\leq m-d\}
\]
By the maximal condition (iv), we conclude that both singular distributions are equal. Now, let ${\omega}$ be a singular distribution whose stalk at $p$ is given by 
\[                                                             
\{\partial \in Der_p(-logE); \text{ }\partial(\umon{\beta_i})\equiv 0 \text{ for all } i\leq m-d\}
\]
and let us prove that ${\omega}$ is a $\mathcal{K}$-monomial singular distribution. First, the vector fields $\partial_i =  \partial w_{r-s+i}$ for $i = s+1, \ldots, d $ are all contained in ${\omega}$. So, consider a vector-field of the form  ${\partial} = \sum_{j=1}^r \alpha_j u_j \partial_{u_j}$ and let us note that:
\[
 \partial(\umon{\beta_i}) = \umon{\beta_i}\left(\sum_{j=1}^k \alpha_j \beta_{i,j} \right)
\]
Since $\partial$ is tangent to $E$, it belongs to ${\omega}$ if and only if:
\begin{equation}
 \sum_{j=1}^r \alpha_j \beta_{i,j} =0 \text{ for }i = 1, \ldots, m-d
 \label{eq:L2}
\end{equation}
Consider the $r- (m-d) = d+r-m =s$ linear subspace $L$ of $\mathcal{K}^{r}$ that contains all vectors $\pmb{\alpha}$ satisfying the equations $\ref{eq:L2}$. In particular, fix system of generators $\{\pmb{\alpha}_{1}, \dots, \pmb{\alpha}_{s}\}$ of $L$ and let $ \partial_i = \sum_{j=1}^r \alpha_{i,j}u_j \partial_{u_j}$, which are vector fields contained in ${\omega}_p$. We now just need to prove that $\{\partial_1,\ldots, \partial_d\}$ generates ${\omega}_p$. Indeed, let $\partial= \sum_{j=1}^{r} A_j \partial_{u_j} + \sum_{j=r+1}^m B_j \partial_{w_j}$ be an analytic vector-field contained in ${\omega}_p$. Then
\[
 \partial(\umon{\beta_i})\equiv 0 \implies \umon{\beta_i} \sum_{j=1}^r \beta_{i,j} \frac{A_j}{u_j} \equiv 0
\]
which implies that the formal vector field $\widehat{\partial}$ is contained in the formal distribution generated by $\{\widehat{\partial_1}, \ldots, \widehat{\partial_d}\}$, i.e
\[
\widehat{\partial} = \sum_{i=1}^{s} \widehat{C_i} \widehat{\partial_i} + \sum_{i=s+1}^d \widehat{B_i} \widehat{\partial_{i}}
\]
for some power series $\widehat{C}_i$. Now, let $\pmb{\gamma_i}$ be a multiindex in $\mathcal{K}^r$ such that $\partial_{i}(\umon{\gamma_i}) =\umon{\gamma_i}$ and $\partial_{j}(\umon{\gamma_i}) =0$ if $j\neq i$. Then:
\[
\widehat{\partial}(\umon{\gamma_i}) = \widehat{C_i} \umon{\gamma_i}
\]
which implies that $\partial(\umon{\gamma_i}) = \umon{\gamma_i} C_i$, where $C_i$ has the same formal expression of $\widehat{C_i}$ and is analytic. We conclude that $\partial = \sum_{i=1}^{s} C_i \partial_i + \sum_{i=s+1}^d B_i \partial_{i} $, which finishes the proof.
\end{proof}
 
We now turn to two more important results in of monomial singular distributions:
 
\begin{lemma}[Openness of monomiality]
The $\mathcal{K}$-monomiality is an open condition i.e. if ${\omega}$ is $\mathcal{K}$-monomial at $p$ in $M$, then there exists an open neighborhood $U$ of $p$ such that ${\omega}$ is $\mathcal{K}$-monomial at every point $q$ in $U$. Moreover, if $(\pmb{u},\pmb{w})$ is a coordinate system defined in a connected open neighborhood $V$ which is monomial at $p$, then ${\omega}$ is $\mathcal{K}$-monomial everywhere in $V$.
\label{lem:Gmonloc}
\end{lemma}
\begin{proof}
For a proof coming directly from the definition, see \cite{BeloT} Lemma 2.2.1 or \cite{Bel2} Lemma 3.6. In here, we present a different proof (using Lemma \ref{lem:Fi}) whose reasoning is useful for the current paper (see Lemma \ref{lem:claim1}).\\
\\
Fix a monomial coordinate system $(\pmb{u},\pmb{w}) = (u_1,\ldots,  u_r,w_{r+1}, \ldots,\allowbreak w_m)$ centered at $p$ which is defined in a connected neighborhood $U$ of $p$. Since ${\omega}$ is $\mathcal{K}$-monomial, by Lemma \ref{lem:Fi}, there exists $m-d$ monomials $\umon{B} = (\umon{\beta_1},\dots,\umon{\beta_{m-d}})$, such that
 \[
{\omega}_p =\{\partial \in Der_p(-logE);\text{ } \partial(\umon{\beta_i})\equiv 0 \text{ for } i\leq m-d\}
\]
where $\pmb{\beta_i} \in \mathcal{K}^r$. So, fix a point $q \in U$ and let $(\pmb{\xi},\pmb{\zeta})$ be its coordinate in the coordinate system $(\pmb{u},\pmb{w})$. Apart from re-indexing, we can assume that $\pmb{\xi} = (0, \dots, 0, \xi_{t+1},\dots, \xi_k)$ for some $t\leq k$. Furthermore, in the real case we can assume that $\xi_i<0$ by considering the changes $x_i=-x_i$ (and the respective change in first integral). Consider the coordinate system $(\pmb{x},\pmb{y},\pmb{v}) = (x_1, \dots, x_t,\allowbreak y_{t+1}, \dots, y_r, v_{r+1},\dots v_{m})$ where
\[
\begin{aligned}
 x_i &=u_i\\
 y_i&=u_i-\xi_i\\
 v_i&=w_i-\zeta_i
\end{aligned}
\]
which is a coordinate system centered at $q$. We can now write:
\[
 \umon{B} = \xmon{B_1}(\pmb{y}-\pmb{\xi})^{\pmb{B_2}}
\]
where $\pmb{B_1}$ is a $r \times t$ matrix and $\pmb{B_2}$ is a $r \times (m-d-t)$ matrix such that
\[
 \pmb{B} = \begin{bmatrix}
                   \pmb{B_1} & \pmb{B_2}
                  \end{bmatrix}
\]
Furthermore, apart from re-ordering the lines of the matrix $\pmb{B}$, we can further write:
\[
 \pmb{B} = \begin{bmatrix}
 \pmb{B^{'}_1}&\pmb{B^{'}_2} \\
 \pmb{B^{''}_1}&\pmb{B^{''}_2}
\end{bmatrix}
\]
where $ \pmb{B_1} = \begin{bmatrix} \pmb{B^{'}_1} \\ \pmb{B^{''}_1}\end{bmatrix}$ and the rank of $\pmb{B^{'}_1}$ is maximal and equal to the rank of $\pmb{B_1}$. So, there exists a change of coordinates $(\pmb{x}\sm{1},\pmb{y}\sm{1},\pmb{v}\sm{1})$ such that:
\[
 \umon{B} = \pmb{x}\sm{1}^{\pmb{C_1}}(\pmb{y}\sm{1}-\pmb{\xi})^{\pmb{C_2}}
\]
where:
\[
 \pmb{C} = \begin{bmatrix}
                   \pmb{C_1} & \pmb{C_2}
                  \end{bmatrix} = \begin{bmatrix}
 \pmb{C^{'}_1}&\pmb{C^{'}_2} \\
 \pmb{C^{''}_1}&\pmb{C^{''}_2}
\end{bmatrix} = \begin{bmatrix}
 \pmb{B^{'}_1}& 0  \\
 \pmb{B^{''}_1}&\pmb{\Lambda}
\end{bmatrix}
\]
where $\Lambda$ is a maximal rank matrix with entries in $\mathcal{K}$. This implies that the collection $(\pmb{x}\sm{1}^{\pmb{B_1^{'}}},\pmb{x}\sm{1}^{\pmb{B_1^{''}}}(\pmb{y}\sm{1}-\pmb{\xi})^{\pmb{\Lambda}} )$ is a collection of first integrals of ${\omega}_q$. Since $\pmb{B_1^{'}}$ has rank equal to $\pmb{B_1}$, we conclude that:
\[
(\pmb{x}\sm{1}^{\pmb{B_1^{'}}},(\pmb{y}\sm{1}-\pmb{\xi})^{\pmb{\Lambda}})
\]
is another collection of first integrals of ${\omega}_q$. Furthermore, since $\pmb{\Lambda}$ is of maximal rank, there exists a coordinate system $(\pmb{x}\sm{2},\pmb{y}\sm{2},\pmb{z}\sm{2},\pmb{v}\sm{2})$ where $\pmb{x}\sm{2} = \pmb{x}\sm{1}$ and $\pmb{v}\sm{2}=\pmb{v}\sm{1}$ such that:
\[
 (\pmb{y}\sm{1}-\pmb{\xi})^{\pmb{\Lambda}} = \pmb{y}\sm{2}-\pmb{\xi}\sm{2}
\]
which finally implies that the monomial functions 
\[
(\pmb{x}\sm{1}^{\pmb{B_1^{'}}},\pmb{y}\sm{2})
\]
are first integrals of ${\omega}_q$. By the analyticity of ${\omega}$ and Lemma \ref{lem:Fi}, we conclude that the singular distribution ${\omega}_q$ is monomial. Since $q$ is an arbitrary point in $U$, we conclude that the monomiality property is open.
\end{proof}

\begin{lemma}
Let $(M,{\omega},E)$ be a foliated manifold where ${\omega}$ is $\mathbb{Q}$-monomial at point $p$. Then, there exists an open neighborhood $U$ of $p$ and a sequence of admissible blowings-up $\tau :(\widetilde{U},\widetilde{{\omega}},\widetilde{E}) \to (M,{\omega},E)$ such that, at every point $q$ in the pre-image of $p$, there exists a monomial coordinate system $(\pmb{u},\pmb{w}) = (u_1,\ldots,  u_r,w_{r+1}, \ldots w_m)$ centered at $q$ and $m-d$ analytic monomials $\umon{B} = (\umon{\beta_1},\dots,\umon{\beta_{m-d}})$, where the matrix $\pmb{B}$ has maximal rank and entries in $\mathbb{N}$, such that
 \[
{\omega}_p =\{\partial \in Der_p(-logE);\text{ } \partial(\umon{\beta_i})\equiv 0 \text{ for all } i\}
\]
In this case, we call $\umon{B}$ a complete system of analytic first integrals.
\label{lem:Final}
\end{lemma}
\begin{proof}
Fix a monomial coordinate system $(\pmb{u},\pmb{w}) = (u_1,\ldots,  u_r,w_{r+1}, \ldots,\allowbreak w_m)$ centered at $p$ which is defined in a connected neighborhood $U$ of $p$. Since ${\omega}$ is $\mathbb{Q}$-monomial, by Lemma \ref{lem:Fi}, there exists $m-d$ monomials $\umon{B} = (\umon{\beta_1},\dots,\umon{\beta_{m-d}})$, such that
 \[
{\omega}\cdot \mathcal{O}_p =\{\partial \in Der_p(-logE);\text{ } \partial(\umon{\beta_i})\equiv 0 \text{ for } i\leq m-d\}
\]
and apart from taking a multiple of the multiindexes $\pmb{\beta_i}$, we can assume that $\pmb{\beta_i} \in \mathbb{Z}^r$. So, we conclude that there exists two multiindexes $\pmb{\delta_i}$ and $\pmb{\gamma_i}$ in $\mathbb{N}^r$ such that $\pmb{\beta_i} = \pmb{\delta_i} - \pmb{\gamma_i}$. Consider the ideal $I$ generated by:
\[
I = (\umon{\delta_i}\umon{\gamma_i}(\umon{\delta_i}-\umon{\gamma_i}))
\]
and let $\tau :(\widetilde{U},\widetilde{{\omega}},\widetilde{E}) \to (M,{\omega},E)$ be a principalization of $I$, where $U$ is a sufficiently small neighborhood of $p$ where $I$ is well-defined. The sequence of blowings-up $\tau$ may be chosen to be \emph{combinatorial} in respect to the exceptional divisor $F:= \{\Pi_{i=1}^r u_i\}$, i.e. $\tau$ is a composition of blowings up with centers that are strata of the divisor $F$ and its total transforms. So, we can cover the $\widetilde{U}$ by affine charts with coordinate system $(\pmb{x},\pmb{w})$ centered at a point $q$ such that:
\[
\pmb{u} = \pmb{x}^{\pmb{A}} \text{ where }\pmb{A}=
\begin{bmatrix}
 a_{1,1} & \ldots  & a_{1,r} \\
 \vdots &  \ddots & \vdots \\
a_{r,1} & \ldots  & a_{r,r} 
\end{bmatrix}
\]
By Lemma \ref{lem:CompForm}, $\pmb{u}^{\pmb{B}} = \pmb{x}^{\pmb{AB}}$ and ${\omega}$ is $\mathbb{Q}$-monomial at $q$. Furthermore, since $\tau^{\ast}(I)$ is principal, we conclude that either $\xmon{A\beta_i}$ or $\xmon{-A\beta_i}$ is analytic, which implies we can choose analytic monomial first integrals of $\widetilde{{\omega}} \cdot \mathcal{O}_q$ (i.e. without poles). Now, by Lemma \ref{lem:Gmonloc} and analiticity of the first integrals, we conclude that we can choose analytic monomial first integrals of $\widetilde{{\omega}}$ at any point in the pre-image of $p$.
\end{proof}

\section{${\omega}$-admissible Blowings-up}
\label{sec:tadm}
 
Given an ideal sheaf $\mathcal{I}$, we consider ideal sheaves $\Gamma_{{\omega},k}(\mathcal{I})$, which we call \textit{generalized $k$-Fitting ideal}, whose stalk at each point $p$ in $M$ is generated by all terms of the form:
\[
det \left\| 
\begin{array}{ccc}
\partial _1(f_1) & ... & \partial_{1}(f_k) \\
 \vdots & \ddots & \vdots \\
 \partial_{k}(f_1) & ... & \partial_{k}(f_k) 
\end{array}
\right\|
\]
for $\partial_i \in {\omega} \cdot \mathcal{O}_{p}$ and $f_j \in \mathcal{I}\cdot \mathcal{O}_{p}$.
 
\begin{definition}[${\omega}$-admissible blowing-up] We say that an admissible blowing-up $\sigma:(\widetilde{M},\widetilde{{\omega}},\widetilde{E}) \to (M,{\omega},E)$ is ${\omega}$\textit{-admissible} if there exists $d_0 \in \mathbb{N}$ such that:
\begin{enumerate}
\item The generalized $k$-Fitting ideal $\Gamma_{{\omega},k}(\mathcal{I}_{\mathcal{C}})$ is equal to $\mathcal{O}_M$ for $k\leq d_0$;
\item The ideal $\Gamma_{{\omega},k}(\mathcal{I}_{\mathcal{C}}) + \mathcal{I}_{\mathcal{C}}$ is {equal to} $\mathcal{I}_{\mathcal{C}}$ for $k>d_0$. 
\end{enumerate} 
where $\mathcal{I}_{\mathcal{C}}$ is the reduced ideal sheaf whose support is the center of the blowing-up. We say that the blowing up is  ${\omega}$\textit{-invariant}, moreover, if $d_0 =0$.  
\label{def:tadm}
\end{definition}

The following result enlightens the interest of ${\omega}$-admissible blowings-up:

\begin{proposition}
\cite[Theorem 4.1.1]{BeloT} or \cite[Proposition 4.4]{Bel2}. Let $(M,{\omega},E)$ be a $\mathcal{K}$-monomial foliated manifold and $
\sigma: (\widetilde{M},\widetilde{{\omega}},\widetilde{E})\allowbreak \to (M,{\omega},E)
$ be a ${\omega}$-admissible blowing-up. Then $\widetilde{{\omega}}$ is also $\mathcal{K}$-monomial.
\label{prop:AdmCenter}
\end{proposition}
Before continuing, let us present a couple of examples in order to illustrate the definition:
\begin{example}
We present four examples:
\begin{enumerate}
\item If the center $\mathcal{C}$ is ${\omega}$-invariant center (i.e if all leaves of ${\omega}$ that intersects $\mathcal{C}$ are contained in $\mathcal{C}$), the blowing-up is ${\omega}$-admissible.\\
\item If the center $\mathcal{C}$ is an admissible ${\omega}$-totally transverse (i.e all vector fields in ${\omega}$ are transverse to $\mathcal{C}$), the blowing-up is ${\omega}$-admissible.\\
\item Let $M = \mathbb{C}^3$ and ${\omega}$ be generated by $ \{\partial_x, \partial_y\}$. A blowing-up with center $\mathcal{C} = \{x=0, z=0\}$ is ${\omega}$-admissible since $\Gamma_{{\omega},1}(\mathcal{I}_{\mathcal{C}}) = \mathcal{O}_M$ and $\Gamma_{{\omega},2}(\mathcal{I}_{\mathcal{C}}) \subset \mathcal{I}_{\mathcal{C}}$.\\
\item Let $M = \mathbb{C}^3$ and ${\omega}$ be generated by $ \{\partial_x, \partial_y\}$. A blowing-up with center $\mathcal{C} = \{x^2-z=0,y=0\}$ is not ${\omega}$-admissible since $\Gamma_{{\omega},2}(\mathcal{I}_{\mathcal{C}}) = (x,y,z)$.
\end{enumerate}
\end{example}
 
\subsection{Foliated ideal sheaves and ${\omega}$-admissible resolution of singularities}
 
A \textit{foliated ideal sheaf} is a quadruple $(M,{\omega},\mathcal{I},E)$ where $\mathcal{I}$ is a coherent and everywhere non-zero ideal sheaf of $\mathcal{O}_M$. Given an admissible blowing-up $\tau: (\widetilde{M},\widetilde{{\omega}},\widetilde{E}) \to (M,{\omega},E)$, we define the transform $\widetilde{\mathcal{I}}$ of $\mathcal{I}$ as the total transform $\mathcal{I}\cdot\mathcal{O}_{\widetilde{M}}$.\\
\\
We present two results which can be found in \cite{Bel2} based on the notion of ${\omega}$-admissible blowings-up. Both results are important technical steps for this work:

\begin{theorem}[${\omega}$-Invariant resolution of Ideal] \cite[Theorem 4.1.1]{BeloT} or \cite[Lemma 7.1]{Bel2}. Let $(M,{\omega}, \mathcal{I},E)$ be a foliated ideal sheaf and $M_0$ a relatively compact open set of $M$. Suppose that $\mathcal{I}_0:=\mathcal{I}\cdot\mathcal{O}_{M_0}$ is invariant by ${\omega}_0:={\omega}\cdot \mathcal{O}_{M_0}$, i.e., ${\omega}_0 [ \mathcal{I}_0]\allowbreak \subset \mathcal{I}_0$. Then, there exists a sequence of ${\omega}$-admissible blowings-up:
\[
\begin{tikzpicture}
  \matrix (m) [matrix of math nodes,row sep=3em,column sep=3em,minimum width=1em]
  {(\widetilde{M},\widetilde{{\omega}},\widetilde{\mathcal{I}},\widetilde{E}) = (M_r,{\omega}_r,\mathcal{I}_r,E_r) & \cdots & (M_0,{\omega}_0,\mathcal{I}_0,E_0)\\};
  \path[-stealth]
    (m-1-1) edge node [above] {$\sigma_r$} (m-1-2)
    (m-1-2) edge node [above] {$\sigma_1$} (m-1-3);
\end{tikzpicture}
\]
such that $\widetilde{\mathcal{I}}$ is principal with support contained in $\widetilde{E}$. In particular, if ${\omega}$ is $\mathcal{K}$-monomial, then $\widetilde{{\omega}}$ is $\mathcal{K}$-monomial.
\label{thm:RinvI}
\end{theorem}

\begin{theorem}[${\omega}$-Resolution of Ideal] \cite[Theorem 1.3]{Bel2}. Let $(M,{\omega},\mathcal{I},E)$ be a foliated ideal sheaf. Then, for every point $p$ in $M$, there exists a ${\omega}$-admissible {complete} collection of local blowings-up (i.e all local blowings-up are ${\omega}$-admissible - see subsection \ref{ssec:CLB})
\[
\tau_i : (M_i,{\omega}_i,\mathcal{I}_i,E_i) \rightarrow (M,{\omega},\mathcal{I},E)
\]
such that the ideal sheaf $\mathcal{I}_i$ is a principal ideal sheaf with support contained in $E_i$. In particular, if ${\omega}$ is $\mathcal{K}$-monomial, then ${\omega}_i$ is $\mathcal{K}$-monomial.
\label{thm:RI}
\end{theorem}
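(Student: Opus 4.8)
The last assertion — preservation of monomiality — is automatic: as soon as every blowing-up in the collection is $\theta$-admissible, Theorem \ref{thm:AdmCenter} shows that if $\theta$ is monomial then so is each $\theta_i$. Hence the real task is to build the collection, and the plan is an induction on the leaf dimension $d$ of $\theta$. If $d=0$ then $\theta=0$; a $\theta$-admissible center is then nothing but a closed regular subvariety having SNC with $E$, and every ideal sheaf is trivially $\theta$-invariant because $\theta(\mathcal I)=0\subseteq\mathcal I$. So Theorem \ref{thm:RinvI} applies verbatim and even produces a single global sequence of blowings-up, which is a fortiori a collection of local blowings-up. From now on $d\geq 1$ and the statement is assumed for all smaller leaf dimensions.

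For the inductive step fix $p\in M$ and distinguish local regimes by the behaviour of the generalized Fitting ideal $\Gamma_{\theta,1}(\mathcal I)$ near $p$. If $\mathcal I$ is $\theta$-invariant on a neighbourhood $U$ of $p$ (that is, $\Gamma_{\theta,1}(\mathcal I)\subseteq\mathcal I$ on $U$), then Theorem \ref{thm:RinvI} applied to $(U,\theta,\mathcal I,E)$ finishes, all of its centers being $\theta$-invariant hence $\theta$-admissible. The whole difficulty is the reduction of the general case to this one, and it is achieved by two moves.

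\emph{Transverse move.} Assume $\Gamma_{\theta,1}(\mathcal I)_p=\mathcal O_p$, so there are $X\in\theta_p$ and $f\in\mathcal I_p$ with $X(f)(p)\neq 0$. Then $X$ is nonsingular near $p$ and, as $\theta\subseteq Der_M(-\log E)$, tangent to $E$; the components of $E$ are thus unions of $X$-orbits and one may pick a flow-box coordinate system $(x_1,\dots,x_m)$ adapted to $E$ with $X=\partial_{x_1}$ and, after a change $x_1\mapsto x_1-h(x_2,\dots,x_m)$ which disturbs neither $X$ nor $E$, with $f=x_1\cdot(\mathrm{unit})$; hence $x_1\in\mathcal I_p$ and $\mathcal I_p=(x_1)+\mathcal I'$ where $\mathcal I'$ is generated by functions of $x_2,\dots,x_m$ only. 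On the transversal $N=\{x_1=0\}$ the distribution $\theta$ induces, after killing $\partial_{x_1}$, an involutive distribution $\theta'$ of leaf dimension $d-1$, tangent to $E|_N$, and $\mathcal I'$ is an ideal sheaf on $N$. Apply the inductive hypothesis to $(N,\theta',\mathcal I',E|_N)$ and realise each $\theta'$-admissible blowing-up obtained as its product with the $x_1$-line on $U\cong N\times\mathbb C_{x_1}$: such a product center is $\theta$-admissible — with $\partial_{x_1}$ invariant for it and the remaining directions $\theta'$-admissible, which is precisely the splitting of Remark \ref{rk:IntAC}. This reduces us to $\mathcal I=(x_1)+(\xmon{\beta})$ with $\xmon{\beta}$ a monomial in exceptional variables of $N$, and a final finite sequence of blowings-up with centers of the form $\{x_1=0\}\cap\{\text{coordinate subspace}\}$ — each $\theta$-admissible, since $\{x_1=0\}$ is transverse to $X$ and a coordinate subspace of exceptional variables is contained in the invariant divisor — principalizes $(x_1)+(\xmon{\beta})$ and leaves it supported on the divisor, exactly as in the toric resolution of monomial ideals.

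\emph{Lowering move, and assembly.} If $\mathcal I$ is not $\theta$-invariant at $p$ yet there is no transverse derivation there, one enlarges $\mathcal I$ to $\mathcal I+\theta(\mathcal I)$, strictly larger in a neighbourhood of $p$, resolves it by the inductive apparatus, divides out the resulting monomial and iterates, termination being governed by the same resolution invariant as in the non-foliated theory and in \cite{BeloT}. Finally one covers a relatively compact neighbourhood of $p$ by finitely many charts, on each of which one of these regimes applies, and concatenates the finitely many sequences so produced into a single collection of local blowings-up; relative compactness of each local step together with finiteness of the covering gives the compactness condition. The main obstacle is exactly this orchestration: turning the regime split into an honest, locally finite and chart-compatible procedure, proving termination of the lowering move, and verifying at each intermediate stage that the chosen center is $\theta$-admissible — so that the ambient distribution retains leaf dimension $d$ and tangency to the exceptional divisor, and Theorem \ref{thm:AdmCenter} can be invoked at the end for the monomiality clause. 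All the rest is the classical resolution-of-ideals machinery run with this extra $\theta$-admissibility bookkeeping.
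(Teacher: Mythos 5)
The first thing to note is that the paper does not prove Theorem \ref{thm:RI} at all: it is imported verbatim from Theorem 1.1 of \cite{Bel2}, and the introduction states explicitly that the proofs of Theorems \ref{thm:RinvI} and \ref{thm:RI} are not reproduced here. So there is no internal proof to match your sketch against; the relevant comparison is with \cite{Bel2}, which is a substantial stand-alone paper, and measured against that your proposal has genuine gaps rather than being an alternative proof. The easy parts are fine: the monomiality clause does follow from Theorem \ref{thm:AdmCenter} once all centers are $\theta$-admissible, and the reduction of the $\theta$-invariant regime to Theorem \ref{thm:RinvI} is immediate. But the two moves that carry the actual content are not established. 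In the transverse move you use, without proof, a cylindrical splitting of $\theta$ near a point where it contains a regular vector field (that the coherent involutive $\theta$ is locally generated by $\partial_{x_1}$ together with vector fields in $x_2,\dots,x_m$ whose coefficients do not depend on $x_1$, inducing a leaf-dimension $(d-1)$ distribution on the transversal); this needs an argument, as does the claim that a product of a $\theta'$-admissible center with the $x_1$-line is $\theta$-admissible — Remark \ref{rk:IntAC} gives the implication in the opposite direction only, so the Fitting-ideal conditions have to be checked by hand.

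The decisive gap is the lowering move. Replacing $\mathcal{I}$ by $\mathcal{I}+\theta(\mathcal{I})$, resolving invariantly, factoring out the resulting monomial and iterating is not known to terminate, and your only justification is an appeal to ``the same resolution invariant as in the non-foliated theory and in \cite{BeloT}''. That invariant is not available here: the whole difficulty of the foliated problem is that the centers produced by classical principalization (maximal contact, etc.) are in general not $\theta$-admissible, which is exactly why \cite{Bel2} has to introduce its own invariant (a tangency order along the distribution) and a delicate induction, and why the result is only local, via collections of local blowings-up. Your closing paragraph concedes that termination of the lowering move, the chart orchestration, and the admissibility bookkeeping at every intermediate stage are ``the main obstacle'' — but those are precisely the theorem, so what you have is a plausible strategy outline, not a proof.
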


\section{Main Invariant of a foliated Darboux data}
\label{sec:MainInv} 

A foliated $\mathcal{K}$-Darboux data $(M,{\omega},\mathcal{D},E)$ will be called a $\mathcal{K}$-monomial foliated Darboux data if ${\omega}$ is $\mathcal{K}$-monomial. We start by simplifying the expressions of the first integrals via resolution of singularities: 

\begin{lemma}
Let $(M,{\omega},\mathcal{D},E)$ be a $\mathcal{K}$-monomial foliated Darboux data and consider monomial coordinate systems $(\pmb{u},\pmb{w})$ of $p$. Then, {by} a {complete} collection of local ${\omega}$-admissible blowings-up, we can {reduce to the case that} there exists {multi-indexes} $\pmb{\delta}_i$ with entries in $\mathcal{K}$ such that:
\begin{equation}
 f_i = g_i + \umon{\delta_i}T_i
 \label{eq:Basica}
\end{equation}
where $g_i$ are first integrals of ${\omega}$; the function $T_i$ are analytic; and all monomials in the Taylor expansion of $\umon{\delta_i}T_i$ are \textit{not} first integrals of ${\omega}$.
\label{lem:eqBasica}
\end{lemma}
\begin{proof}
This result will follow from simultaneous ${\omega}$-admissible resolution of singularities. Indeed, by assumption $
f_i = \prod_j g_{i,j}^{k_{i,j}}
$, where $g_{i,j}$ are analytic functions and $k_{i,j} \in \mathcal{K}$. Consider the ideal:
\[
\mathcal{I} = \left( \prod_{i,j} g_{i,j}\right)
\]
which is locally defined in some open set $U$ of $p$. By theorem \ref{thm:RI}, there exists a ${\omega}$-admissible {complete} collection of local blowings-up $\tau_l : (U_l,{\omega}_l,\mathcal{I}_l,E_l) \rightarrow (U,{\omega},\mathcal{I},E)$ which principalize $\mathcal{I}$. In particular:
\[
\tau^{\ast}_{l}(g_{i,j}) = \umon{\gamma_{i,j,l}} U_{i,j,l}
\]
where $\pmb{\gamma}_{i,j,l}$ is a multi-index with entries in $\mathbb{Q}$ and $U_{i,j,l}$ is an analytic unit. Therefore:
\[
\tau^{\ast}_{l}(f_{i}) = \umon{ \delta_{i,l}} U_{i,l}
\]
where $\pmb{\delta}_{i,l} = \sum_j k_{i,j} \gamma_{i,j,l}$ and $U_{i,l} = \prod U_{i,j,l}^{k_{i,j}}$ is an analytic function. Since $U_{i,l}$ is analytic, we can use its Taylor expansion in order to get:
\[
\tau^{\ast}_{l}(f_{i}) = g_{i,l} + \umon{\delta_{i,l}}T_{i,l}
\]
where $g_{i,l}$ are first integrals of ${\omega}_l$ and all monomials in the Taylor expansion of $\umon{\delta_i}T_{i,l}$ are \textit{not} first integrals of ${\omega}$.
\end{proof}

\begin{definition}[Main Invariant]\label{def:invariant}
Let $(M,{\omega},\mathcal{D},E)$ be a $\mathcal{K}$-monomial foliated Darboux data. We will say that $\nu(p, {\omega},\mathcal{D}) = \infty$ if equation \ref{eq:Basica} is not satisfied. Otherwise, we consider the coordinate dependent function
\[
\nu(p, {\omega},\mathcal{D},(\pmb{u},\pmb{w})):=\min\{|\pmb{\lambda}|:\ \partial^{\pmb{\lambda}}_{\pmb{w}} T_i\text{ is a unit}\}.
\]
where we assume that $\partial_{\pmb{w}}^{\pmb{\lambda}}$ is the identity if $\pmb{w}$ is empty, and $\nu(p)=\infty$ if there are no $\pmb{\lambda}$ such that $\partial^{\pmb{\lambda}}_{\pmb{w}} T_i$ is a unit. We define the \textit{tangency order} of $(M,{\omega},\mathcal{D},E)$ by:
\[
\nu(p, {\omega},\mathcal{D}):=\min\{\nu(p, {\omega},\mathcal{D},(\pmb{u},\pmb{w})):\ \text{for all }(\pmb{u},\pmb{w}) \}.
\]
When there is no risk of confusion, we simply denote $\nu(p, {\omega},\mathcal{D})$ by $\nu(p)$.
\end{definition}
 
In what follows, we consider $\nu$ as the main invariant. Note that this invariant is upper {semi-continuous in $M$} (since the $T_i$'s are analytic). We now present a normal form which will be used in the remainder of the paper, in part so to fix notation that will be used in the following sections.
 
\begin{lemma}[Weierstrass-Tschirnhausen normal form]\label{def:basicnormalform}
Let $p$ be a point of $M$ where the invariant $\nu = \nu(p,{\omega},\mathcal{D})$ is finite and bigger than one, i.e $1< \nu < \infty$. Then, there exists a monomial coordinate system $(\pmb{u},v,\pmb{w}){=\allowbreak(u_1,\ldots,  u_r,v,w_{r+2},\allowbreak \ldots, w_m)}$ at $p$ such that the functions $T_i$ are given by:
\begin{equation}
 \label{eq:basicnormaform}
\begin{aligned}
T_1 &= v^{\nu} U  + \sum^{\nu-2}_{j=0} a_{1,j}(\pmb{u},\pmb{w}) v^j\text{ where $U$ is an unit, and}\\
T_i &= v^{\nu} \bar{T}_i  + \sum^{\nu-1}_{j=0} a_{i,j}(\pmb{u},\pmb{w}) v^j
\end{aligned}
\end{equation}
and the vector-field $\partial_{v}$ belongs to ${\omega}_p$.
\label{lem:BasicNormalForm}
\end{lemma}
\begin{proof}
Since the invariant is finite, there exists a coordinate system $(\pmb{u},v,\pmb{w})$ of $p$ such that the vector-field $\partial_{v}$ belongs to ${\omega}_p$ and, apart from re-indexing, the function $\partial^{\nu}_{v^{\nu}} T_1$ is a unit. Furthermore, by the implicit function Theorem, there is a change of coordinates $(\widetilde{\pmb{u}},\widetilde{v},\widetilde{\pmb{w}}) = (\pmb{u},V(\pmb{u},v,\pmb{w}),\pmb{w})$ such that $\partial^{\nu-1}_{\widetilde{v}^{\nu-1}} T_1(\widetilde{\pmb{u}},0,\widetilde{\pmb{w}}) \equiv 0$. Thus:
 
\[
  \begin{aligned}
  T_1 &= \widetilde{v}^{\nu} U + \sum^{\nu-2}_{j=0} \widetilde{v}^j a_{1,j}(\widetilde{\pmb{u}},\widetilde{\pmb{w}}) \text{ where $U$ is an unit, and}\\
  T_i &= \widetilde{v}^{\nu} \bar{T}_i + \sum^{\nu-1}_{j=0} \widetilde{v}^j a_{i,j}(\widetilde{\pmb{u}},\widetilde{\pmb{w}})
 \end{aligned}
\]
Finally, since $\widetilde{\pmb{u}} = \pmb{u}$ and $\widetilde{\pmb{w}} = \pmb{w}$, we have that $ \partial_{v} =  U \partial_{\widetilde{v}}$ for some unit $U$. This implies that $\partial_{\widetilde{v}}$ is contained in ${\omega}_p$, which proves the Lemma.
\end{proof}

\section{Proof of Theorem \ref{thm:main}}
\label{sec:Overview1}

The main result of this work follows from the following (technical) theorem:
 
\begin{theorem}\label{thm:main2}
Let $(M,{\omega},\mathcal{D},E)$ be a non-trivial $\mathcal{K}$-monomial Darboux data. Then, for each point $p$ in $M$, there exists a ${\omega}$-admissible {complete} collection of local blowings-up $\tau_i : (M_i,{\omega}_i,\mathcal{D}_i,E_i) \rightarrow (M,{\omega},\mathcal{D},E)$ such that, for every point $q_i$ in the pre-image of $p$, the invariant $\nu(q_i,{\omega}_i,\mathcal{D}_i)$ is zero or one.
\end{theorem}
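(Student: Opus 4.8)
Throughout we read the invariant $d(q_i,\theta_i,\mathcal{R}_i)$ of the statement as the tangency order $\nu(q_i,\theta_i,\mathcal{R}_i)$ of Definition \ref{def:invariant} (the symbol $d$ here denoting this tangency order and not the leaf dimension). The plan is a well-founded induction that drives $\nu$ down to $\{0,1\}$ on the fibre over $p$. The base case is $\nu(p)\le 1$: since $\theta$ is monomial and $\nu$ is upper semicontinuous, after shrinking to a small neighbourhood $U$ of $p$ the inclusion $U\hookrightarrow M$ is (trivially) a $\theta$-admissible collection of local blowings-up, its image is a neighbourhood of $p$ so condition [II] holds, and the only point over $p$ is $p$ itself, where $\nu\le 1$. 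It therefore suffices to produce, whenever $\nu(p)\ge 2$ (including $\nu(p)=\infty$), a $\theta$-admissible collection over a neighbourhood of $p$ after which $\nu(q_i)$ is \emph{strictly smaller} at every $q_i$ over $p$; since one such step turns $\nu=\infty$ into a finite value and thereafter $\nu$ is a strictly decreasing nonnegative integer, the induction terminates.

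For the inductive step recall, from the proof of upper semicontinuity, the intrinsic increasing chain of coherent ideals $I_1=\langle\theta[\mathcal{R}]\rangle$, $I_{s+1}=I_s+\theta[I_s]$, which stabilises at some index $\mu$. Because $\theta$ is monomial it sends monomial ideals into themselves, so the chain stabilises exactly at its first monomial member; consequently $\nu(q)=\min\{s:\ I_s\text{ is monomial at }q\}$ (with the convention $\nu=\infty$ when no $I_s$ is monomial). Hence there is a well-defined \emph{last non-monomial} member of the chain, namely $J:=I_{\nu-1}$ when $\nu<\infty$ and $J:=I_\mu$ when $\nu=\infty$; in both cases $J$ is a coherent, everywhere non-zero ideal sheaf on a neighbourhood of $p$ whose index $s_0$ in the chain is strictly smaller than $\nu(p)$. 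We apply the $\theta$-resolution of ideals (Theorem \ref{thm:RI}) to the foliated ideal sheaf $(M,\theta,J,E)$: this produces a $\theta$-admissible collection of local blowings-up $\tau_i$ over a neighbourhood of $p$ after which the total transform of $J$ is principal with support in $E_i$ — that is, monomial — while $\theta_i$ stays monomial.

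The heart of the argument is to deduce that this modification strictly lowers the tangency order, i.e. $\nu(q_i)\le s_0<\nu(p)$ for every $q_i$ over $p$. The mechanism is that the chain $I_\bullet$ is compatible with $\theta$-admissible blowings-up up to exceptional monomial factors: since $\theta_i$ is the analytic strict transform of $\theta$, each generator $X_i$ of $\theta_i$ differs from $\tau_i^\ast X$ by a factor supported on the exceptional divisor (the $\mathcal{O}(F)$-twist built into $\zeta^{-1}\sigma^\ast$), and by the Leibniz rule this factor propagates along the iterated bracket so that the ideal $I_{s_0}^{(q_i)}$ recomputed upstairs from $(\theta_i,\mathcal{R}_i)$ agrees with the total transform $J\cdot\mathcal{O}_{q_i}$ up to an exceptional monomial. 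As the latter is monomial after the resolution and a monomial ideal remains monomial when multiplied or divided by an exceptional monomial, $I_{s_0}^{(q_i)}$ is monomial at $q_i$, whence $\nu(q_i)\le s_0$. Establishing this compatibility precisely — tracking the exceptional factors through the bracket iteration and using that $\theta$-admissibility keeps the monomial normal form of Definition \ref{def:MSD} available at each stage, as in \cite{BeloT,Bel2} — is the main obstacle of the proof.

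It remains to iterate and to assemble the collections. By upper semicontinuity of $\nu$ and the openness of monomiality (Lemma \ref{lem:Gmonloc}) each $q_i$ over $p$ now has $\nu(q_i)<\nu(p)$, and applying the induction hypothesis at each $q_i$ and composing the resulting $\theta$-admissible collections yields a $\theta$-admissible collection with $\nu\le 1$ on the whole fibre over $p$; a composition of collections of local blowings-up is again one, condition [II] being preserved by the usual compactness argument on the compact sets $K_i$. Finally, non-triviality persists along the process: if every generator $f_j\circ\tau_i$ of $\mathcal{R}_i$ were a first integral of $\theta_i$ then, dividing out the exceptional factor, every $X(f_j)$ would vanish near $p$, contradicting non-triviality at $p$; thus no fibre point becomes trivial, $\nu$ stays finite after the first step, and the conclusion $\nu\in\{0,1\}$ is genuine.
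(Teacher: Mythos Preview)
Your strategy is genuinely different from the paper's, and its central step is not established. The paper does not argue via the chain $I_\bullet$ at all; it runs a \emph{double} induction, on $\dim M$ and on $\nu$, and the dimension induction is essential. Concretely, Lemma~\ref{lem:DealingMpoints} first makes $\nu$ finite, then Proposition~\ref{prop:preparation} brings the data to the Prepared Normal Form of Definition~\ref{def:prepnormalform}; this preparation step \emph{assumes Theorem~\ref{thm:main2} in dimension $\dim M-1$}, applied on the hypersurface $\{v=0\}$ to control the constant-in-$v$ terms $b_{i,0}$. Only after preparation does the paper perform a specific sequence of combinatorial blowings-up in the $(\mathbf{u},v)$ divisor and verify, through an explicit case analysis (Lemmas~\ref{lem:claim1}, \ref{lem:claim2}, \ref{lem:Dropping}), that $\nu$ strictly drops. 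Your argument has no analogue of the dimension induction and no normal-form step.

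The concrete gap is your claim that principalising $J=I_{s_0}$ forces the \emph{recomputed} ideal $I_{s_0}^{(q_i)}$ to be monomial. The upstairs chain is built from $\theta_i$ acting on $\tau_i^{\ast}\mathcal{R}$, not from $\tau_i^{\ast}I_\bullet$. A generator $X'\in\theta_i$ has the form $X'=\sum_j h_j\,\tau_i^{\ast}X_j$ with $h_j\in\mathcal{O}(F)$, so $X'(\tau_i^{\ast}f)=\sum_j h_j\,\tau_i^{\ast}(X_j(f))$; the exceptional coefficients $h_j$ depend on $X'$ and do not amount to a single monomial twist of the ideal. After $s_0$ iterations of $I\mapsto I+\theta_i[I]$ these factors compound (this is iterated derivation, not an ``iterated bracket'' as you write), and there is no reason the outcome is principal monomial merely because $\tau_i^{\ast}I_{s_0}$ is. You yourself flag this as ``the main obstacle'' but do not resolve it, and the paper's proof shows no trace of such a compatibility: it circumvents the issue entirely through the Prepared Normal Form and coordinate-by-coordinate verification. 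Without an actual proof of this compatibility (or a replacement for it), your induction does not go through.
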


The proof of this result follows from three main steps. We will present these steps in section \ref{sec:Overview} and we prove them in sections \ref{sec:Infinte}, \ref{sec:prep} and \ref{sec:drop}. In the rest of this section we use this result to prove Theorem \ref{thm:main}. We start by showing why the result is useful:

\begin{lemma}[Invariant zero or one]
\label{lem:invariantzerodone}
Let $(M,{\omega},\mathcal{D},E)$ be a $\mathcal{K}$-monomial foliated Darboux data and $p$ a point of $M$ where the invariant $\nu(p,{\omega},\mathcal{D})$ is $0$ or $1$. Then, there exists an index $i_0$ and a monomial coordinate system $(\pmb{u},\pmb{w}){=(u_1,\ldots,  u_r,}\allowbreak {w_{r+1},\ldots, w_m)}$ such that:
\[
 f_{i_{0}} = g_{i_0} + \umon{\delta_{{i_0}}}w_{{m}}^{\epsilon}
\]
where we recall that $g_{i_0}$ is a first integral of ${\omega}$ and $\umon{\delta_{{i_0}}}w_{{m}}^{\epsilon}$ is not a first integral (see equation \ref{eq:Basica}). Moreover the constant {$\epsilon = \nu \in \{0,1\}$}. In particular, the foliated Darboux-data $(M,\omega,\mathcal{D},E)$ given by $\omega_p = \{\partial \in {\omega}_p ; \partial(f_{i_0}) \equiv 0\}$ is $\mathcal{K}$-monomial.
\end{lemma}
\begin{proof}
Fix a monomial system of coordinates $(\pmb{u},\pmb{w})$ and recall that,  by Lemma \ref{lem:Fi}, there exists a complete system of first integrals $\umon{B} = (\umon{\beta_1}, \dots, \umon{\beta_{m-d}})$ of ${\omega}$. We now consider the cases where $\nu(p)$ is zero and one separately:\\
\\
First, assume that $\nu(p)=0$ (this is the case when $\epsilon =0$). Without loss of generality, we can assume that $T_1$ is a unit. By the definition of the functions $T_i$, the multi-index $\pmb{\delta}_1$ has to be linearly independent with all the multi-indexes $\pmb{\beta_i}$. Thus, apart from a change of coordinates (which preserves all monomials), we can assume that $T_1 = 1$ . So, the singular distribution $\omega= \{\partial \in {\omega} ; \partial(f_{1}) \equiv 0\}$ has a complete system of first integrals given by $(\umon{B},\umon{\delta_1})$ which implies that it is $\mathcal{K}$-monomial.\\
\\
Now, assume that $\nu(p)=1$ (this is the case when $\epsilon =1$). In this case, there exists a coordinate system $(\pmb{u},v,\pmb{w})$ such that $\partial_v T_1$ is a unit. So, apart from a change of coordinates in the $v$ coordinate, we can assume that $T_1 = v$. Thus, the singular distribution $\omega= \{\partial \in {\omega} ; \partial(f_{1}) \equiv 0\}$ has a complete system of first integrals given by $(\umon{B},\umon{\delta_1}v)$ which implies that it is $\mathcal{K}$-monomial.
\end{proof}

\subsection{Proof of Theorem \ref{thm:main} (Assuming Theorem \ref{thm:main2})}

Fixed the point $p$, recall that there exists $n$ first integrals $(f_1, \dots, f_n)$ of $\theta$ such that
\[
 d f_1 \wedge \dots \wedge df_n \neq 0
\]
So, let us consider a $\mathcal{K}$-monomial $m$-foliated Darboux data $(M,\omega\sm{0},\mathcal{D},E)$ where $\omega\sm{0}$ is the monomial singular distribution $Der_M(-logE)$ i.e the sheaf of derivations of $M$ tangent to $E$.

In this case, let us note that the singular distribution $\theta \cap Der_M(-logE)$ is obviously contained in $\omega\sm{0}$. The proof follows a recursive argument:

\begin{claim}
Let $\theta$ be a singular distribution with $n$ first integrals $(f_1, \dots, f_n)$ and $(M,\omega\sm{k},\mathcal{D},E)$ be a $\mathcal{K}$-monomial $(m-k)$-foliated Darboux data with $k<n$ such that:
\begin{enumerate}
 \item $\mathcal{D}$ is given by the $n$ first integrals $(f_1,\dots, f_n)$ of $\theta$;
 \item Apart from re-indexing the functions $(f_1,\dots, f_n)$, the singular distribution $\omega\sm{k}$ is equal to $\{\partial \in Der_M(-logE); \partial(f_i)\equiv 0 \text{ for all }i\leq k\}$. In particular $\theta \cap Der_M(-logE) \subset \omega\sm{k}$. 
\end{enumerate}
Then, for every point $q$ in $M$, there exists a collection of $\omega\sm{k}$-admissible local blowings-up:
\[
 \Phi_i : (M_i,\omega_i\sm{k},\mathcal{D}_i,E_i) \rightarrow (M,\omega\sm{k},\mathcal{D}, E) 
\]
such that, for each point $q_i$ in the pre-image of $q$, there exists a $\mathcal{K}$-monomial $[m-(k+1)]$-foliated Darboux data $(M_i,\omega_i\sm{k+1},\mathcal{D}_i,E_i)$ that satisfies properties $(1)$ and $(2)$ in respect to the strict transform $\theta_i$, i.e:
\begin{enumerate}
 \item $\mathcal{D}_i$ is given by the $n$ first integrals $\tau_i^{\ast}(f_1,\dots, f_n) = (f_1^\ast,\dots,\allowbreak f_n^\ast)$ of $\theta_i$;
 \item Apart from re-indexing the functions $(f_1^\ast,\dots,f_n^\ast)$, the singular distribution $\omega\sm{k+1}$ is equal to $\{\partial \in Der_{M_i}(-logE_i); \partial(f^{\ast}_i)\equiv 0 \text{ for all }i\leq k+1\}$. In particular $\theta \cap Der_{M_i}(-logE_i) \subset \omega_i\sm{k+1}$. 
\end{enumerate}
\end{claim}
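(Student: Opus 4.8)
The statement is the assembly of Theorem \ref{thm:main2} and Lemma \ref{lem:invariantzerodone}, plus the bookkeeping needed to turn the pointwise output of the latter into data on each chart. First I would check that $(M,\theta\sm{k},\mathcal{R},E)$ is non-trivial at $p$, so that Theorem \ref{thm:main2} applies: if it were trivial at $p$, then the generator $f_{k+1}$ (it lies in $\mathcal{R}.\mathcal{O}_p$, since $k+1\le n$) would be a first integral of $\theta\sm{k}$, whence $\theta\sm{k}\subseteq\{X\in Der_M(-\log E):X(f_l)\equiv 0,\ l\le k+1\}$; but by $k+1\le n$ and $df_1\wedge\dots\wedge df_n\not\equiv 0$ the right-hand side has leaf dimension $m-k-1$, while $\theta\sm{k}$ has leaf dimension $m-k$, which is impossible (containment of distributions forces the corresponding inequality of generic ranks). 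Theorem \ref{thm:main2} then produces a $\theta\sm{k}$-admissible collection of local blowings-up $\tau_j:(M_j,\theta_j\sm{k},\mathcal{R}_j,E_j)\to(M,\theta\sm{k},\mathcal{R},E)$ with $\nu(q,\theta_j\sm{k},\mathcal{R}_j)\in\{0,1\}$ at every $q$ over $p$, and each $\theta_j\sm{k}$ is monomial by Theorem \ref{thm:AdmCenter}.

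Writing $f_l^{\ast}=\tau_j^{\ast}f_l$, I would note that $(f_1^{\ast},\dots,f_n^{\ast})$ generates $\mathcal{R}_j$, still satisfies $df_1^{\ast}\wedge\dots\wedge df_n^{\ast}\not\equiv 0$ (a birational pull-back), and that each $f_l^{\ast}$ is a first integral of the strict transform $\omega_j$ of $\omega$, hence of $\theta_j\sm{k}$ — so $(M_j,\theta_j\sm{k},\mathcal{R}_j,E_j)$ is again a monomial foliated sub-ring sheaf with these generators. Next I would prove the identity
\[
\theta_j\sm{k}=\{X\in Der_{M_j}(-\log E_j)\ :\ X(f_l^{\ast})\equiv 0\text{ for all }l\le k\},
\]
where ``$\subseteq$'' is the previous sentence and ``$\supseteq$'' comes from the maximality property $[iv]$ of the monomial distribution $\theta_j\sm{k}$: the right-hand side is a $(m-k)$-singular distribution contained in $Der_{M_j}(-\log E_j)$ (its generic rank equals $m-k$, since $df_1^{\ast}\wedge\dots\wedge df_k^{\ast}\not\equiv 0$) and it contains $\theta_j\sm{k}$, so the two coincide.

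Then I would refine the collection so that the index supplied by Lemma \ref{lem:invariantzerodone} becomes constant on each chart. For $j_0\in\{1,\dots,n\}$ let $U_{j,j_0}\subseteq M_j$ be the set of points at which the involutive distribution $\{X\in\theta_j\sm{k}:X(f_{j_0}^{\ast})\equiv 0\}$ is monomial; this set is open by Lemma \ref{lem:Gmonloc}. Lemma \ref{lem:invariantzerodone}, applied at a point $q$ over $p$, yields an index $i_0(q)$ for which $\{X\in\theta_j\sm{k}:X(f_{i_0(q)}^{\ast})\equiv 0\}$ is monomial of leaf dimension $m-(k+1)$, so $q\in U_{j,i_0(q)}$ and the family $\{U_{j,j_0}\}$ covers $\bigcup_j\tau_j^{-1}(p)$. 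Restricting the $\tau_j$ to the $U_{j,j_0}$ and relabelling — a routine compactness argument, using that $\{\nu\le 1\}$ is open, lets one shrink the compact sets of condition $[II]$ so that it survives — gives a collection $\Phi_i:(M_i,\theta_i\sm{k},\mathcal{R}_i,E_i)\to(M,\theta\sm{k},\mathcal{R},E)$ on which one index works throughout $M_i$; after re-indexing $(f_1^{\ast},\dots,f_n^{\ast})$ I call it $k+1$ and set $\theta_i\sm{k+1}:=\{X\in\theta_i\sm{k}:X(f_{k+1}^{\ast})\equiv 0\}$, which is monomial of leaf dimension $m-(k+1)$.

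It remains to verify $[i]$ and $[ii]$: $[i]$ holds because $\mathcal{R}_i$ is generated by $\Phi_i^{\ast}(f_1,\dots,f_n)=(f_1^{\ast},\dots,f_n^{\ast})$, and combining the displayed identity (applied to $\theta_i\sm{k}$) with the definition of $\theta_i\sm{k+1}$ gives $\theta_i\sm{k+1}=\{X\in Der_{M_i}(-\log E_i):X(f_l^{\ast})\equiv 0,\ l\le k+1\}$, from which $\omega_i\cap Der_{M_i}(-\log E_i)\subseteq\theta_i\sm{k+1}$ follows since each $f_l^{\ast}$ is a first integral of $\omega_i$. I expect the genuinely delicate points to be the globalization step — making the index of Lemma \ref{lem:invariantzerodone} constant chart-by-chart while preserving the covering-by-compacts condition $[II]$ — and the displayed identity for $\theta_j\sm{k}$, which leans on constancy of the leaf dimension and on property $[iv]$ being inherited through $\theta\sm{k}$-admissible blowings-up; the substantial work, of course, is packed into the cited Theorem \ref{thm:main2}.
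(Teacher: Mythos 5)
Your proposal follows the paper's own route: non-triviality of $(M,\theta\sm{k},\mathcal{R},E)$, then Theorem \ref{thm:main2}, then Lemma \ref{lem:invariantzerodone} at each point over $p$, then openness of monomiality (Lemma \ref{lem:Gmonloc}) plus compactness to make the index constant chart by chart, and finally the identification of $\theta_i\sm{k}$ (and hence $\theta_i\sm{k+1}$) with the annihilator of the first pulled-back generators inside $Der_{M_i}(-\log E_i)$. The only methodological difference is that for this last identity the paper invokes Remark \ref{rk:MST} (analytic strict transform versus strict transform of a monomial distribution), whereas you argue via the maximality property $[iv]$ of Definition \ref{def:MSD} together with a generic-rank count; that substitute is legitimate and, if anything, more explicit than the paper's one-line appeal to the remark. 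Your treatment of non-triviality and of the globalization of the index $i_0$ is also more detailed than the paper's.

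There is, however, one incorrect implication you should repair: the sentence ``each $f_l^{\ast}$ is a first integral of the strict transform $\omega_j$ of $\omega$, hence of $\theta_j\sm{k}$.'' The ``hence'' goes the wrong way: $\omega_j\cap Der_{M_j}(-\log E_j)$ is \emph{contained} in $\theta_j\sm{k}$, so a first integral of $\omega_j$ need not be one of the larger distribution $\theta_j\sm{k}$, and indeed for $l>k$ the assertion is false (otherwise $\theta_j\sm{k}$, of leaf dimension $m-k$, would generically lie in the kernel of the $n>k$ independent differentials $df_1^{\ast},\dots,df_n^{\ast}$). What your ``$\subseteq$'' inclusion actually needs is only the case $l\le k$, and the correct justification is the direct pull-back argument: by hypothesis $[ii]$, every $X\in\theta\sm{k}$ satisfies $X(f_l)\equiv 0$ for $l\le k$, and since the (analytic) strict transform $\theta_j\sm{k}$ is generated, inside $Der_{M_j}(-\log E_j)$, by preimages of $\tau_j^{\ast}$-pullbacks of such vector fields, every section of $\theta_j\sm{k}$ annihilates $f_l^{\ast}=\tau_j^{\ast}f_l$ for $l\le k$. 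With that one-line fix the ``$\subseteq$'' holds, your maximality argument gives ``$\supseteq$'', and the rest of the proposal goes through; note also (as the paper does) that the index produced by Lemma \ref{lem:invariantzerodone} is automatically $>k$, since $f_{i_0}^{\ast}$ is not a first integral of $\theta_i\sm{k}$, so your re-indexing does not disturb the first $k$ generators.
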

 
\begin{proof}
Indeed, since $df_1 \wedge \dots \wedge df_n \neq 0$ and $k<n$, the $\mathcal{K}$-monomial foliated Darboux data $(M,\omega\sm{k},\mathcal{D},E)$ is non-trivial. Thus, by Theorem $\ref{thm:main2}$ there exists a collection of $\omega\sm{k}$-admissible local blowings-up:
\[
 \Phi_i : (M_i,\omega_i\sm{k},\mathcal{D}_i,E_i) \rightarrow (M,\omega\sm{k},\mathcal{D}, E) 
\]
such that, for every point $q_i$ in the pre-image of $q$, the invariant $\nu(q_i,\omega_i\sm{k},\mathcal{D}_i)$ is either zero or one. So, by Lemma $\ref{lem:invariantzerodone}$ there exists a $\mathcal{K}$-monomial $(m-k-1)$-foliated Darboux data $(M_i,\omega_i\sm{k+1},\mathcal{D}_i,E_i)$ where
\[
 \omega_i\sm{k+1}\cdot \mathcal{O}_{q_i} = \{\partial \in \omega_i\sm{k} \cdot \mathcal{O}_{q_i} ; \partial(f^\ast_{i_0}) \equiv 0\}
\]
for some index $i_0>k$. So, by the compacity of the pre-image of $q$ and Lemma \ref{lem:Gmonloc}, after shrinking $M_i$ if necessary, we can suppose that the singular distribution $\omega_i\sm{k}$ is $\mathcal{K}$-monomial everywhere in $M_i$ and is independent of the point $q_i$. So, apart from re-indexing, we conclude that:
\[
\begin{aligned}
 \omega_i\sm{k+1} &= \{\partial \in \omega_i\sm{k} ; \partial(f^\ast_{k+1}) \equiv 0\}\\
 &= \{\partial \in Der_{M_i}(-logE_i); \partial(f^{\ast}_i)\equiv 0 \text{ for all }i\leq k+1\}
\end{aligned}
\]
which proves the Claim.
\end{proof}

So, we can recursively use the Claim over $(M,\omega\sm{0},\mathcal{D},E)$ in order to get a {complete} collection of local blowings-up:
\[
 \Phi_i : (M_i,\mathcal{D}_i,E_i) \rightarrow (M,\mathcal{D}, E) 
\]
where, for each point $q_i$ in the pre-image of $p$, there exists a trivial $\mathcal{K}$-monomial $(m-n)$-foliated Darboux data $(M_i,\omega\sm{m-n},\mathcal{D}_i,E_i)$ such that:
\[
 \omega\sm{m-n} = \{\partial \in Der_{U_i}(-logE); \partial(f^{\ast}_i)\equiv 0 \text{ for all }i\leq n\}
\]
Note that the strict transform $\theta_i$ of $\theta$ has first integrals in $\mathcal{D}_i$, which implies that $\theta_i \cap Der_{U_i}(-logE_i) \subset \omega\sm{m-n}$. Now, by Lemma \ref{lem:Fi}, given a point $q_i$ in $U_i$ there exists a monomial coordinate system $(\pmb{u},\pmb{w})$ centered at $q_i$ and $n$-monomial first integrals $\umon{B} = (\umon{\beta_1}, \dots, \umon{\beta_n})$ of $\omega\sm{m-n} . \mathcal{O}_{q_i}$ where $\pmb{B}$ is of maximal rank with coefficients in $\mathcal{K}$. Since $\theta_i \cap Der_{U_i}(-logE_i) \subset \omega\sm{m-n} $, the monomials $\umon{B}= (\umon{\beta_1}, \dots, \umon{\beta_n})$ are also first integrals of $\theta_i$ as we wanted to prove. Finally, if $\mathcal{K}=\mathbb{Q}$, apart from applying Lemma \ref{lem:Final}, we can assume that $\pmb{B}$ has coefficients in $\mathbb{N}$ (instead of $\mathbb{Q}$), which finishes the proof.
 
\section{Theorem \ref{thm:main2}: Overview of the proof}
\label{sec:Overview}

In the remainder of the article, we prove Theorem \ref{thm:main2}. We will make the invariant $\nu$ of $(M,{\omega},\mathcal{D},E)$ decrease by a sequence of ${\omega}$-admissible local blowings up which, by Proposition \ref{prop:AdmCenter}, preserve the $\mathcal{K}$-monomiality of the singular distribution ${\omega}$. Our proof of Theorem \ref{thm:main2} has three main steps (as in \cite{AB,Bel2}):

\medskip\noindent
{\bf Step 1.} Reduction of $\nu$ to a finite value.

\medskip
The following proposition \ref{prop:Infinite} will be proved in Section \ref{sec:Infinte}.

\begin{proposition}[Reduction to a finite invariant]
Let $(M,{\omega},\mathcal{D},E)$ be a non-trivial $\mathcal{K}$-monomial foliated Darboux data and $p$ a point where the invariant $\nu(p,{\omega},\mathcal{D})=\infty$. Then, there exists a ${\omega}$-admissible {complete} collection of local blowings-up $\tau_i : (M_i,{\omega}_i,\mathcal{D}_i,\allowbreak E_i) \rightarrow (M,{\omega},\mathcal{D},E)$ such that, for every point $q_i$ in the pre-image of $p$, the invariant $\nu$ is finite, i.e. $\nu(q_i,{\omega}_i,\mathcal{D}_i)<\infty$ .
\label{prop:Infinite}
\end{proposition}

The proof of this result is a consequence of Theorem \ref{thm:RinvI}. Note that a point where the invariant is finite satisfies the conclusion of Lemma \ref{lem:BasicNormalForm}. In this case we will say that $(M,{\omega},\mathcal{D},E)$ is in \textit{Weierstrass-Tschirnhausen form} at $p$.

\medskip\noindent
{\bf Step 2.} Reduction to prepared normal form.

\medskip
The following Proposition \ref{prop:preparation} will be proved in Section \ref{sec:prep}. 

\begin{proposition}[Preparation]\label{prop:preparation}
Let $p$ be a point of $M$ where the invariant $\nu = \nu(p,{\omega},\mathcal{D})$ is finite and bigger than one, i.e $1< \nu < \infty$. Furthermore, suppose that Theorem $\ref{thm:main2}$ is valid for any $\mathcal{K}$-monomial Darboux data $(N, {\omega'}, \mathcal{S},F)$ with $dim \,N <dim \,M$. Then, there exists a ${\omega}$-admissible {complete} collection of local blowings-up $\tau_i : (M_i,{\omega}_i, \mathcal{D}_i, E_i) \rightarrow (M,{\omega}, \mathcal{D}, E)$ such that, for every point $q_i$ in the pre-image of $p$, the foliated Darboux data $(M_i,{\omega}_i,\mathcal{D}_i,E_i)$ has Weierstrass-Tschirnhausen form of lemma \ref{lem:BasicNormalForm} {(with coordinate system $(\pmb{u},v,\pmb{w})=\allowbreak(u_1,\ldots,  u_r,v,w_{r+2},\allowbreak \ldots, w_m)$)} satisfying (apart from re-indexing) the following additional property for $T_1$:
\[
T_1 = v^{\nu} U + \sum^{\nu-2}_{j=1} v^j \pmb{u}^{\pmb{r}_{j}} b_{j}(\pmb{u},\pmb{w}) + b_{0}(\pmb{u},\pmb{w})
\]
where the functions $U$ is a unit and the function $b_{j}$ is either a unit (and $\pmb{r}_{j} \neq 0$) or zero for $j=1,\ldots, \nu-2$. Furthermore, either $b_{0} = 0$ or:
\[
 b_{0}(\pmb{u},\pmb{w}) = \pmb{u}^{\pmb{\beta}} w_{{m}}^\epsilon 
\]
where $\epsilon \in \{0,1\}$. Finally, the blowings-up involved do not increase the value of $\nu$ over any point, i.e. $\nu(q_i,{\omega}_i,\mathcal{D}_i) \leq \nu(q,{\omega},\mathcal{D})$.
\end{proposition}

\begin{remark}
Note that the inductive hypothesis \textit{``Theorem $\ref{thm:main2}$ is true any foliated Darboux data $(N,{\omega'},\mathcal{S},F)$ with $dim \,N < \,dim M$''} is trivially true when $dimM=1$.
\end{remark}
 
When the foliated Darboux data $(M,{\omega},\mathcal{D},E)$ satisfies the thesis of proposition \ref{prop:preparation} at a point $p$, we will say that $p$ is a \textit{prepared} point and that the Weierstrass-Tschirnhausen form in lemma \ref{lem:BasicNormalForm} is prepared at $p$.

\medskip\noindent
{\bf Step 3.} Further admissible blowings-up to decrease the maximal value of the invariant $\nu$.

\medskip
The following proposition \ref{prop:Dropping} will be proved in Section \ref{sec:drop}.

\begin{proposition}\label{prop:Dropping}
Let $p$ be a point of $M$ where the invariant $\nu = \nu(p,{\omega},\mathcal{D})$ is finite and bigger than one, i.e, $1< \nu < \infty$. Furthermore, suppose that $(M,{\omega},\mathcal{D},E)$ satisfies the prepared normal form at $p$ (see Proposition \ref{prop:preparation}). Then, for a small enough neighborhood $M_0$ of $p$, there exists a sequence of ${\omega}$-admissible blowings-up $\pmb{\tau}: (M_r,{\omega}_r,\mathcal{D}_r,E_r) \to (M_0,{\omega}_0,\mathcal{D}_0,E_0)$ such that, for all point $q$ in the pre-image of $p$, the invariant $\nu$ has dropped, i.e. $\nu(q,{\omega}_r,\mathcal{D}_r) < \nu(p,{\omega},\mathcal{D})$.
\end{proposition}

We can now prove the main technical result of this work:

\begin{proof}[Proof of Theorem \ref{thm:main2}]
The proof of the theorem follows from Propositions \ref{prop:Infinite}, \ref{prop:preparation} and \ref{prop:Dropping} by induction on the dimension of $M$ and the maximal value of the invariant $\nu$. Finally, since all blowings-up are ${\omega}$-admissible, by Proposition \ref{prop:AdmCenter} we conclude that the final involutive distributions are $\mathcal{K}$-monomial.
\end{proof}

\section{Theorem \ref{thm:main2}: Dropping to a finite invariant}
\label{sec:Infinte}

We follow the notation of section \ref{sec:Overview} and we prove Proposition \ref{prop:Infinite} in this section. By Lemma \ref{lem:eqBasica}, we can suppose that equation \ref{eq:Basica} is satisfied. Let $\{\partial_1,\dots,\partial_d\}$ be a monomial system of generators of ${\omega}$ and $(\pmb{u},\pmb{w})$ a monomial coordinate system at $p$. We prove the result by strong induction on the number of singular vector-fields in $\{\partial_1,\dots,\partial_d\}$.\\
\\
\noindent\emph{Base Step:} Suppose that all vector-fields $\partial_i$ are regular. By the definition of monomial coordinate system, this implies that $(\pmb{u},\pmb{w}) = (u_1, \dots, u_{m-d},w_{m-d+1},\dots, w_m)$ and we can assume that $\partial_j = \partial_{w_{k}}$ with $k=m-d+j$. So, let us consider the Taylor expansion of $T_i$ over $p$:
\[
 T_i = \sum_{\lambda} \wmon{\lambda}T_{i,\lambda}(\pmb{u}) 
\]
where $T_{i,0}$ is zero (since $\umon{\delta_i}T_{i,0}$ would be a first integral of ${\omega}$). Now, consider the ideal $\mathcal{I}$ generated by the functions:
\[
 \{T_{i,\lambda}(\pmb{u}); \text{ }\forall \text{ }\lambda \text{ and }i \}
\]
Since $I$ is ${\omega}$-invariant, by Theorem $\ref{thm:RinvI}$ there exists a sequence of ${\omega}$-invariant blowings-up:
\[
\tau: (\widetilde{U},\widetilde{{\omega}},\widetilde{\mathcal{D}},\widetilde{E}) \to (U,{\omega},\mathcal{D},E)
\]
that principalize $I$, where $U$ is an open neighborhood of $p$ where $I$ is well-defined. Since the blowings-up are all ${\omega}$ invariant, for each point $q$ in the pre-image of $p$ there exists a coordinate system $(\pmb{x},\pmb{w})$ such that $ \tau^{\ast}(\partial_i) = \partial_{w_{k}}$ and $I^{\ast}$ is generated by a monomial $\xmon{\beta}$. In particular, let $(i_0,\lambda_0)$ be an index such that $T_{i_0,\lambda_0}^{\ast}=\xmon{\beta}$. Thus:
\[
\begin{aligned}
 T_{i_0}^{\ast} &=  \sum_{\lambda} \wmon{\lambda}T_{i_0,\lambda}(\pmb{u})^{\ast}\\
 &= \xmon{\beta}\left[ \pmb{w}^{\lambda_0}U +   \sum_{\lambda \neq \lambda_0} \pmb{w}^{\lambda} \widetilde{T}_{i_0,\lambda}(\pmb{x}) \right]
\end{aligned}
\]
where $U$ is a unit. Note also that $\lambda_0 \neq 0$ (because $T_{i,0} \equiv 0$ for all $i$). So, the invariant $\nu(q,\widetilde{{\omega}},\widetilde{\mathcal{D}})$ is finite and smaller or equal to $\|\lambda_0\|$.\medskip

\noindent\emph{Induction Step:} Suppose, by strong induction, that the Proposition is true if there are $l_0$ vector-fields in $\{\partial_1,\dots,\partial_d\}$ which are singular with $l_0<l$. We assume that there are $l$ vector-fields over $\{\partial_1,...,\partial_d\}$ that are singular. So, we can rename this set as $\{Y_1, \dots, Y_l,Z_{l+1},\dots Z_d\}$, where the vector-fields $Y_i$ are all singular and $Z_i $ are regular vector-fields. By the definition of monomial coordinate system, we have $(\pmb{u},\pmb{w}) = (u_1, \dots, u_{m-d+l},w_{m-d+l+1},\dots, w_m)$ and we can assume that $Z_j = \partial_{w_{l}}$ with $l=m-d+j$, and $Y_j = \sum \alpha_{i,j}u_k \partial_{u_k}$ for coefficients $\alpha_{i,j} \in \mathcal{K}$. Now, let us consider the Taylor expansion of $T_i$ over $p$:
 \[
 T_i = \sum_{\lambda} \wmon{\lambda}  T_{i,\lambda}(\pmb{u}) 
\]
 Now, note that, given any monomial $\umon{\gamma}$:
 \[
  Y_j(\umon{\gamma}) = K_{j,\gamma}\umon{\gamma}
 \]
where $ K_{j,\gamma}$ is a constant in $\mathcal{K}$. Let $K_{\gamma}$ denote the vector $(K_{1,\gamma}, \dots, K_{l,\gamma})$. In this case, we have a notion of eigenvector associated to the vector-fields $Y_j$:
\[
 T_{i,\lambda}(\pmb{u}) = \sum_{K} T_{i,\lambda,K}(\pmb{u}) 
\]
where all monomials $\umon{\gamma}$ in the expansion of $T_{i,\lambda,K}$ are such that $K_{\gamma}=K$. So, we can write:
\[
 T_i = \sum_{\lambda} \wmon{\lambda} \sum_{K} T_{i,\lambda,K}(\pmb{u}) 
\]
Now, let $I$ be the ideal generated by the functions:
\[
 \{T_{i,\lambda,K}; \text{ }\forall \text{ }i, \text{ } \lambda \text{ and }K \}
\]
Since $I$ is ${\omega}$-invariant, by Theorem $\ref{thm:RinvI}$ there exists a sequence of ${\omega}$-invariant blowings-up:
\[
\tau: (\widetilde{U},\widetilde{{\omega}},\widetilde{\mathcal{D}},\widetilde{E}) \to (U,{\omega},\mathcal{D},E)
\]
that principalize $I$, where $U$ is an open neighborhood of $p$ where $I$ is well-defined. Since the blowings-up are all ${\omega}$-invariant, for each point $q$ in the pre-image of $p$ there exists a coordinate system $(\pmb{x},\pmb{w})$ such that $ \tau^{\ast}(Z_j) = \partial_{w_{k}}$ and $I^{\ast}$ is generated by a monomial $\xmon{\beta}$. In particular, the number of generators of $\widetilde{{\omega}}\cdot \mathcal{O}_q$ which are singular must be smaller or equal than $l$. If the number of singular generators of $q$ is strictly smaller than $l$, we can apply the strong induction hypothesis to obtain a ${\omega}$-admissible {complete} collection of local blowings-up over $q$ so that invariant decreases to a finite value in the pre-image of a  neighborhood of $q$.\\
\\
So, let us assume that there exists $l$ singular vector-fields in $\widetilde{{\omega}} \cdot \mathcal{O}_q$. In particular, these vector-fields should be generated by $Y_j^{\ast}$ (since $Z_j^{\ast}$ are regular). Moreover, there exists an index $(i_0,\lambda_0,K_0)$ such that $T_{i_0,\lambda_0,K_0}^{\ast}$ is a generator of $I^{\ast}$, i.e $T_{i_0,\lambda_0,K_0}^{\ast} = \xmon{\beta}W$, where $W$ is a unit. Then:
\[
 \begin{aligned}
  T_{i_0}^{\ast} &= \sum_{\lambda} \wmon{\lambda}  \sum_{K} T_{i_0,\lambda,K}(\pmb{u})^{\ast}\\
 &= \xmon{\beta}\left[ \pmb{w}^{\lambda_0}\left(W + \sum_{K \neq K_0}\widetilde{T}_{i_0,\lambda_0,K}\right) +   \sum_{\lambda \neq \lambda_0} \pmb{w}^{\lambda} \widetilde{T}_{i_0,\lambda}(\pmb{x}) \right]
 \end{aligned}
\]
We claim that all functions $\widetilde{T}_{i_0,\lambda_0,K}$ with $K\neq K_0$ are \textit{not} unities, which implies that
\[
 W + \sum_{K \neq K_0}\widetilde{T}_{i_0,\lambda_0,K}
\]
is a unit and the invariant $\nu(q,\widetilde{{\omega}},\widetilde{\mathcal{D}})$ is smaller or equal than $\|\lambda_0\|$ (even if $\|\lambda_0\|=0$ since $\umon{\delta_{i}} T_{i,\lambda,K}$ are non-zero eigen-vectors of ${\omega}$). Indeed, let us assume by contradiction that $\widetilde{T}_{i_0,\lambda_0,K}$ is a unit for some $K \neq K_0$. In one hand, this implies that:
\[
 T_{i_0,\lambda_0,K}^{\ast} = \xmon{\beta}V
\]
where $V$ is a unit. By another hand, since $K\neq K_0$, there exists $j_0$ such that the $j_0$ entry of $K$ and $K_0$ are different. Now:
\[
 \begin{aligned}
   Y_{j_0}^{\ast}(T_{i_0,\lambda_0,K_0}^{\ast}) &= Y_{j_0}^{\ast}(\xmon{\beta}W) = <K_{0},e_{j_0}> \xmon{\beta}W \text{ and}\\
Y_{j_0}^{\ast}(T_{i_0,\lambda_0,K}^{\ast}) &= Y_{j_0}^{\ast}(\xmon{\beta}V)= <K,e_{j_0}> \xmon{\beta}V
 \end{aligned}
\]
which implies that:
\[
 \begin{aligned}
   Y_{j_0}^{\ast}(\xmon{\beta}) &= \xmon{\beta} \left(<K_{0},e_{j_0}> +  \frac{Y_{j_0}^{\ast}(W)}{W}  \right) \text{ and}\\
Y_{j_0}^{\ast}(\xmon{\beta})&= \xmon{\beta} \left(<K,e_{j_0}> +  \frac{Y_{j_0}^{\ast}(V)}{V}  \right) 
 \end{aligned}
\]
But, since $Y_{j_0}^{\ast}$ is singular:
\[
 <K_{0},e_{j_0}> +  \frac{Y_{j_0}^{\ast}(W)}{W} \neq <K,e_{j_0}> +  \frac{Y_{j_0}^{\ast}(V)}{V}  
\]
which is a contradiction. The invariant is finite in an open neighborhood of $q$.

\section{Theorem \ref{thm:main2}: Prepared normal form}
\label{sec:prep}

We follow the notation of Lemma \ref{lem:BasicNormalForm} and section \ref{sec:Overview} and we prove Proposition \ref{prop:preparation} in this section. By Lemma \ref{lem:BasicNormalForm}, the $\mathcal{K}$-monomial foliated Darboux data $(M,{\omega},\mathcal{D},E)$ satisfies the Weierstrass-Tschirnhausen form at $p$, i.e. there exists a monomial coordinate system $(\pmb{u},v,\pmb{w})$ of $p$ such that the functions $T_i$ are given by \eqref{eq:basicnormaform} and the vector-field $\partial_{v}$ belongs to ${\omega}_{p}$ (in particular, we assume that $\partial_v^{\nu}T_1$ is a unit). The main idea of the proof is to modify the coefficients $a_{1,j}$ without changing the $v$-coordinate. This is obtained through two steps, where all blowings-up are ${\omega}$-admissible and $\partial_v$-invariant.\\
\\
\textit{First Step:} Let us perform a ${\omega}$-admissible collection of local blowings-up to get all necessary conditions over the coefficients $a_{1,j}$ with $j>0$. Indeed, let $\pi: M_0 \rightarrow N$ be the projection map given by $\pi(\pmb{u},v,\pmb{w}) = (\pmb{u},\pmb{w})$, where $M_0$ is a small enough neighborhood of $p$, and let $\mathcal{J}$ be the principal ideal sheaf generated by the product of all non-zero $a_{1,j}$ with $j>0$. Then, there exists a $d-1$ foliated ideal sheaf $(N,{\omega'},\mathcal{J},F)$ such that:
 
\begin{itemize}
 \item The singular distribution ${\omega}$ is generated by $\pi^{\ast}{\omega'}$;
 \item The inverse image of $F$ is equal to $E \cap M_0$.
\end{itemize}
 
Now, by Theorem \ref{thm:RI} there exists a ${\omega'}$-admissible {complete} collection of local blowings-up
\[
\sigma_i : (N_i,{\omega_i'},\mathcal{J}_i,F_i) \rightarrow (N,{\omega'},\mathcal{J},F) 
\]
 such that the ideal sheaf $\mathcal{J}_i$ is monomial i.e. $\sigma_i^{\ast} \mathcal{J}$ is a principal ideal sheaf with support contained in $F_i$. We can extend $\sigma_i$ to blowings-up at $M_0$ by taking the product of the centers of $\tau_i$ by the $v$-coordinate:
\[
 \tau_i\sm{1}: (M_i\sm{1},{\omega}_i\sm{1},\mathcal{D}_i\sm{1},E_i\sm{1}) \to (M_0,{\omega}_0,\mathcal{D}_0,E_0)
\]
where all centers have SNC with the exceptional divisor and are invariant by the $v$-coordinate i.e. all centers are $\partial_{v}$-invariant. Moreover, since all centers of $\sigma_i$ are ${\omega'}$-admissible, we conclude that all centers of $\tau_i\sm{1}$ are ${\omega}$-admissible.\\
\\
Now, consider a point $q_i$ in the pre-image of $p$ by $\tau_i\sm{1}$ and let $(\pmb{u}\sm{1},v\sm{1},\pmb{w}\sm{1})$ be a coordinate system at $q_i$ such that $\tau_i\sm{1}^{\ast}v=v\sm{1}$. Since the pull-back $(\tau_i\sm{1} \circ \pi)^{\ast} \mathcal{J}$ is a principal ideal sheaf, we conclude that:
\[
 \begin{aligned}
  T_1 &= v\sm{1}^{\nu} U + \sum^{\nu-2}_{j=1} v\sm{1}^j \pmb{u}\sm{1}^{r_{j}\sm{1}} b_{1}\sm{1} + b_{0}\sm{1} \text{ where $U$ is a unit}
 \end{aligned}
\]
where the functions $b_{j}\sm{1}$ are either zero or units for $j>0$ and the monomials $\pmb{u}\sm{1}^{r_{j}\sm{1}}$ have support in the exceptional divisor $E_i\sm{1}$. Note that $\partial_{v\sm{1}}$ belongs to $ {\omega}_i \cdot \mathcal{O}_{q_i}$ and, in particular, that $\nu(q_i,{\omega}_i,\mathcal{D}_i) \leq \nu(p,{\omega},\mathcal{D})$.\\
\\
\textit{Second Step:} We now perform a ${\omega}$-admissible {complete} collection of local blowings-up to get all necessary conditions over the coefficients $b_{0}\sm{1}$. Indeed, at each point $q_i$ in the pre-image of $p$, apart from taking smaller varieties $M_i\sm{1}$, there exists a projection map $\pi: M_i\sm{1} \rightarrow N_i\sm{1}$ given by $\pi(\pmb{u}\sm{1},v\sm{1},\pmb{w}\sm{1}) = (\pmb{u}\sm{1},\pmb{w}\sm{1})$. Then, there exists a $d-1$ foliated Darboux data $(N_i\sm{1},{\omega_i'\sm{1}},\mathcal{S}_i\sm{1},F_i\sm{1})$ such that:
 
\begin{itemize}
 \item The singular distribution ${\omega}_i\sm{1}$ is generated by $\pi^{\ast}{\omega_i'\sm{1}}$;
 \item The inverse image of $F_i\sm{1}$ is equal to $E_i\sm{1}$;
 \item The $\mathcal{K}$-Darboux data $\mathcal{S}_i\sm{1}$ is generated by $
  f_1|_{\{v\sm{1}=0\}}$.
\end{itemize}
Note that if $b_{0}\sm{1} = 0$ we are done. Otherwise, the foliated Darboux data $(N_i\sm{1},{\omega_i'\sm{1}},\allowbreak\mathcal{S}_i\sm{1},F_i\sm{1})$ is not trivial and, since $dim \, N_i\sm{1}<dim \, M_i\sm{1}$, we can apply Theorem $\ref{thm:main2}$ to $(N_i\sm{1},{\omega_i'\sm{1}},\mathcal{S}_i\sm{1},F_i\sm{1})$ in order to obtain a ${\omega_i'\sm{1}}$-admissible {complete} collection of local blowings-up
\[
 \sigma_{i,j}\sm{2}: (N_{i,j}\sm{2},{\omega_{i,j}'\sm{2}},\mathcal{S}_{i,j}\sm{2},F_{i,j}\sm{2}) \rightarrow (N_i\sm{1},{\omega_i'\sm{1}},\mathcal{S}_i\sm{1},F_i\sm{1}) 
\]
such that, the invariant $\nu$ calculated for $(N_{i,j}\sm{2},{\omega_{i,j}'\sm{2}},\mathcal{S}_{i,j}\sm{2},F_{i,j}\sm{2})$ is zero or one at every point. Furthermore, by Lemma $\ref{lem:invariantzerodone}$, at each point in the pre-image of $q_i$, there exists a coordinate $(\pmb{u}\sm{2},\pmb{w}\sm{2})$ such that:
\begin{equation}
\label{eq:transform2}
\begin{aligned}
 \sigma_{i,j}\sm{2}^{\ast}\left[ f_1|_{\{v\sm{1}=0\}} \right] &=  g_{1}\sm{2} + \pmb{u}\sm{2}^{\pmb{\widetilde{\beta}}}w_{{m}}\sm{2}^{\epsilon}
\end{aligned}
\end{equation}
where $g_{1}\sm{2}$ is a first integral of ${\omega_{i,j}'\sm{2}}$, $\pmb{u}\sm{2}^{\pmb{\widetilde{\beta}}}w_{{m}}\sm{2}^{\epsilon}$ is not a first integral of ${\omega_{i,j}'\sm{2}}$ and $\epsilon \in\{0,1\}$. We can extend $\sigma_{i,j}\sm{2}$ to blowings-up at $M_i\sm{1}$ by taking the product of the centers of $\tau_{i,j}\sm{2}$ by the $v$-coordinate:
\[
 \tau_{i,j}\sm{2}: (M_{i,j}\sm{2},{\omega}_{i,j}\sm{2},\mathcal{D}_{i,j}\sm{2},E_{i,j}\sm{2}) \to (M_i\sm{1},{\omega}_i\sm{1},\mathcal{D}_i\sm{1},E_i\sm{1})
\]
where all centers have SNC with the exceptional divisor and are invariant by the $v$-coordinate i.e. all centers are $\partial_{v}$-invariant. Moreover, since all centers of $\sigma_{i,j}\sm{2}$ are ${\omega'}$-admissible, we conclude that all centers of $\tau_{i,j}\sm{2}$ are ${\omega}$-admissible.\\
\\
Now, consider a point $q_{i,j}$ in the pre-image of $q_i$ and let $(\pmb{u}\sm{2},v\sm{2},\pmb{w}\sm{2})$ be a monomial coordinate system of $q_{i,j}$ such that $\tau_{i,j}\sm{2}^{\ast}v\sm{1}=v\sm{2}$. By equation $\eqref{eq:transform2}$
\[
\begin{aligned}
 \tau_{i,j}\sm{2}^{\ast}\left[ g_{1} + \umon{\delta_{1}}a_{1,0} \right] &=  g_{1}\sm{2} + \pmb{u}\sm{2}^{\pmb{\widetilde{\beta}}}w_{{m}}\sm{2}^{\epsilon} 
\end{aligned}
  \]
  where $g_{1}\sm{2}$ is a first integral of ${\omega}_{i,j}\sm{2}$, $ \pmb{u}\sm{2}^{\pmb{\widetilde{\beta}}}w_{{m}}\sm{2}^{\epsilon}$ is not a first integral of ${\omega}_{i,j}\sm{2}$ and $\epsilon \in\{0,1\}$. Furthermore, since all blowings-up have SNC with the exceptional divisor, we conclude that:
\[
 \begin{aligned}
  T_1 &= v\sm{2}^{\nu} U + \sum^{\nu-2}_{j=1} v\sm{2}^j \pmb{u}\sm{2}^{r_{j}\sm{2}} c_{j}\sm{2} + c_{1,0}\sm{2} \text{ where $U$ is a unit}
 \end{aligned}
\]
where the functions $c_{j}\sm{2}$ are either zero or units for $j>0$, the monomials $\pmb{u}\sm{2}^{r_{j}\sm{2}}$ have support in the exceptional divisor $E_{i,j}\sm{2}$ and
 \[
\begin{aligned}
c_{0} &= \pmb{u}\sm{2}^{\pmb{\beta}}w_{{m}}\sm{2}^{\epsilon} 
\end{aligned}
  \]
where $\epsilon \in \{0,1\}$ and $\pmb{\beta}$ is equal to the multi-index $\pmb{\widetilde{\beta}}$ minus the multi-index that corresponds to the pull-back of $\umon{\delta}$. To finish, note that $\partial_{v\sm{2}}$ belongs to $ {\omega}_{i,j}\sm{2} . \mathcal{O}_{q_i}$ and, in particular, that $\nu(q_{i,j},{\omega}_{i,j},\mathcal{D}_{i,j}) \leq \nu(p,{\omega},\mathcal{D})$.

\section{Theorem \ref{thm:main2}: Dropping the invariant $\nu$}
\label{sec:drop}

We follow the notation of section \ref{sec:Overview} and we prove proposition \ref{prop:Dropping} in this section. We start by a couple of preliminary results about the combinatorial blowings-up.

\subsection{Combinatorial blowings up}
 
\begin{definition}[Sequence of combinatorial blowings-up]\label{def:combblowingsup}
Given a divisor $E$ in $M$, we say that $\tau:\widetilde{M}\to M$ is a sequence of combinatorial blowings-up (with respect to $E$) if $\tau$ is a composition of blowings-up with centers that are strata of the divisor $E$ and its total transforms.
\end{definition}

Consider a $\mathcal{K}$-monomial foliated manifold $(M,{\omega},E)$ and suppose that $(\pmb{u},v,\pmb{w})$ is a globally defined monomial coordinate system centered at a point $p$, where the vector-field $\partial_{v}$ belongs to ${\omega}$. We remark that, by Lemma \ref{lem:Fi} there exists a collection of monomials $\umon{B} = ( \umon{\beta_1}, \dots, \umon{\beta_{m-d}})$ such that a vector field $\partial \in Der_M(-logE)$ belongs to ${\omega}$ if and only if $\partial(\umon{\beta_i})\equiv 0$ for all $i$.

Consider a sequence of combinatorial blowings-up $\pmb{\tau}:(\widetilde{M},\widetilde{{\omega}},\widetilde{E})\to (M,{\omega},E)$ with respect to the declared exceptional divisor $F=\{u_1\dotsm u_r \cdot v =0\}$. Note that such a sequence is ${\omega}$-admissible and, by Proposition \ref{prop:AdmCenter}, the transform $\widetilde{{\omega}}$ is $\mathcal{K}$-monomial. Moreover, we can cover $\widetilde{M}$ by affine charts with a coordinate system $(\pmb{x},\pmb{w})$ satisfying:
\begin{equation}
\label{eq:blowingsup}
\begin{aligned}
u_j &= x_1^{a_{j,1}} \cdots  x_{r+1}^{a_{j,r+1}} \\
v\phantom{_j} &= x_1^{\alpha_{1}} \cdots  x_{r+1}^{\alpha_{r+1}}\\
w_i &= w_i
\end{aligned}
\end{equation}
(where $\alpha_{i,j} \in \mathbb{N}$) that we denote by:
\[
(\pmb{u},v,\pmb{w}) = (\xmon{\mathcal{A}},\pmb{w}) = (\xmon{A},\xmon{\alpha},\pmb{w})
\]
where $\pmb{\mathcal{A}}$ is a $(r+1)$-square matrix $\begin{bmatrix}
                            \pmb{A}\\
                            \pmb{\alpha}
                           \end{bmatrix}$ given by:
\[
\pmb{A} =
\begin{bmatrix}
 a_{1,1} & \ldots  & a_{1,r+1} \\
 \vdots &  \ddots & \vdots \\
a_{r,1} & \ldots  & a_{r,r+1} 
\end{bmatrix}
\text{ and } \pmb{\alpha} = \begin{bmatrix}
                                    \alpha_1 & \dots & \alpha_{r+1}
                                   \end{bmatrix}
\]
Note that $(\pmb{x},\pmb{w})$ is a monomial coordinate system since, by Lemma \ref{lem:CompForm}:
\[
 \tau^{\ast}\umon{B} = \xmon{B A}
\]
is a system of first integrals of $\widetilde{{\omega}}$. Now, let $q$ be {a} point in this affine chart contained in the pre-image of $p$ (recall that $p$ is the origin of the original coordinate system). Apart from re-indexing, we can suppose that $q$ has coordinates $(0,\pmb{\xi},0) = (0,\ldots, 0, \xi_{t+1},\ldots, \xi_{r+1},0, \ldots, 0)$ with $\xi_i \neq 0$. Furthermore, apart from making changes of the form $x_i=-x_i$, we can suppose that $\xi_i<0$ whenever $\xi_i \in \mathbb{R}$. We consider the coordinate system $(\pmb{x}, \pmb{y},\pmb{w})= (x_1,\ldots, x_t,y_{y+1},\ldots, y_{r+1},w_{r+2},\ldots, \allowbreak w_m)$ centered at $q$ where:
\[
y_i = x_i -\xi_{i}
\]
Note that $t \neq 0$ since $q$ is in the pre-image og $p$. We have a decomposition of the matrix $\pmb{\mathcal{A}}$
\[
\pmb{\mathcal{A}} = 
\begin{bmatrix}
 \pmb{A_1}&\pmb{A_2} \\
 \pmb{\alpha_1}&\pmb{\alpha_2}
\end{bmatrix}
\]
where $\pmb{A_1}$ is a $r \times t$ matrix, $\pmb{A_2}$ is a $r\times (r+1-t)$ matrix, $\pmb{\alpha_1}$ is a $1\times t$ matrix and $\pmb{\alpha_2}$ is a $1\times (r-t+1)$ matrix. We remark that, since $q$ is a point on the exceptional divisor $\widetilde{E}$, there exists at least one $u_i$ such that $\pmb{\tau}^{\ast} u_i (q) = 0$, which implies that $\pmb{A_1}$ has to be a non-zero matrix. We now divide our study depending on the rank of $\pmb{A_1}$:

\begin{lemma}[Case 1]\label{lem:claim1}
Assume $\pmb{A_1}$ has maximal rank. Then, there exists a monomial coordinate system $(\pmb{x},\pmb{y},z,\pmb{w}) = (x_1,\ldots, x_t,y_{t+1},\ldots ,\allowbreak y_{r},z,\pmb{w})$ centered at $q$ such that 
\begin{equation}\label{eq:claim1}
\begin{aligned}
\pmb{u}&=\pmb{x}^{\pmb{A}_1}(\pmb{y}-\pmb{\xi})^{\pmb{\Lambda}}\\
v&=\xmon{\alpha_1}(z-\zeta)\\
\pmb{w}&=\pmb{w}
\end{aligned}
\end{equation}
where $\zeta \neq 0$, $\xi_{j} \neq 0$ for all $j$ and the matrix $\pmb{\Lambda}= (\lambda_{i,j})$ of exponents has maximal rank, with $\lambda_{i,j}\in\mathcal{K}$ (in particular, $\partial_z$ is contained in $\widetilde{{\omega}}\cdot \mathcal{O}_q$). Moreover, if $\umon{\gamma}$ is \textbf{not} a first integral of ${\omega}\cdot \mathcal{O}_p$, then its total transform $\umon{\gamma} = \xmon{\widetilde{\gamma}}U$, where $U$ is a unit, satisfies one of the following:
\begin{itemize}
 \item Either the monomial $\xmon{\widetilde{\gamma}}$ is \textbf{not} a first integral of $\widetilde{{\omega}}$;
 \item Or, there exists a regular vector-field $\partial_{y_i} \in \widetilde{{\omega}}\cdot \mathcal{O}_q$ such that $\partial_{y_i} U$ is a unit.
\end{itemize}
 
\end{lemma}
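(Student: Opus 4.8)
The plan is to construct the coordinate system of \eqref{eq:claim1} explicitly, by exactly the same linear‑algebra device used in the proof of Lemma~\ref{lem:Gmonloc}, and then to check that it is a monomial coordinate system for $\widetilde\theta$ by combining Theorem~\ref{thm:AdmCenter} (so $\widetilde\theta$ is monomial at $q$), Lemma~\ref{lem:CompForm} (so $\tau^{\ast}\umon{B}=\xmon{BA}$ is a complete system of first integrals), and the characterisation of monomiality through first integrals in Lemma~\ref{lem:Fi}.

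First I would do the linear algebra. Since $q$ lies on $\widetilde E$ we have $\boldsymbol A_1\neq 0$, and by hypothesis $\boldsymbol A_1$ has maximal rank. Near $q$ we have $(\mathbf{u},v)=\mathbf{x}^{\boldsymbol{\mathcal A}_1}(\mathbf{y}+\boldsymbol\gamma)^{\boldsymbol{\mathcal A}_2}$, where $\mathbf{x}=(x_1,\dots,x_t)$ vanish at $q$ and $y_j+\gamma_j=x_j$ ($\gamma_j\neq 0$) are units. Decomposing the rows of $\boldsymbol A$ so that a top block realises $\operatorname{rk}\boldsymbol A_1$, the argument of Lemma~\ref{lem:Gmonloc} produces a change of the $\mathbf{y}$‑coordinates that absorbs the unit factors $(\mathbf{y}+\boldsymbol\gamma)^{\boldsymbol A_2}$ and gives $\mathbf{u}=\mathbf{x}^{\boldsymbol A_1}(\mathbf{y}-\widetilde{\boldsymbol\gamma})^{\boldsymbol\Lambda}$ with $\widetilde\gamma_j\neq 0$, with $\boldsymbol\Lambda$ of maximal rank, and chosen so that $[\boldsymbol A_1\,|\,\boldsymbol\Lambda]$ is invertible over $\mathbb{Q}$. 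For the $v$‑row: by \eqref{eq:blowingsup} $v=\xmon{\alpha_1}(\mathbf{y}+\boldsymbol\gamma)^{\alpha_2}$ with $(\mathbf{y}+\boldsymbol\gamma)^{\alpha_2}$ a unit, so by the implicit function theorem I introduce a single new coordinate $z$ with $z-\widetilde\gamma=\xmon{-\alpha_1}v=(\mathbf{y}+\boldsymbol\gamma)^{\alpha_2}$, replacing one $\mathbf{y}$‑variable (the degenerate sub‑case $\alpha_2=0$, where $v$ is already a pure monomial, is dealt with directly). This yields \eqref{eq:claim1}. That $\partial_z\in\widetilde\theta.\mathcal O_q$ then follows from computing the lift of $\partial_v$: from $z-\widetilde\gamma=\xmon{-\alpha_1}v$ one gets $\sigma^{\ast}\partial_v=\xmon{-\alpha_1}\partial_z$, hence $\partial_z=\xmon{\alpha_1}\sigma^{\ast}\partial_v$ lies in the $\mathcal O$‑module $\sigma^{\ast}\theta$, and $\partial_z$ is tangent to $\widetilde E\subset\{x_1\cdots x_t=0\}$, so $\partial_z\in\zeta^{-1}\sigma^{\ast}\theta=\widetilde\theta$.

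Next I would verify that $(\mathbf{x},\mathbf{y},z,\mathbf{w})$ is a monomial coordinate system for $\widetilde\theta$ at $q$. By Lemma~\ref{lem:CompForm} and the coordinate change, $\tau^{\ast}\umon{B}=\mathbf{x}^{\boldsymbol B\boldsymbol A_1}(\mathbf{y}-\widetilde{\boldsymbol\gamma})^{\boldsymbol B\boldsymbol\Lambda}$ is a complete system of first integrals of $\widetilde\theta$; since $[\boldsymbol A_1\,|\,\boldsymbol\Lambda]$ is invertible these have maximal rank, and their monomial parts are the $\mathbf{x}^{\boldsymbol B\boldsymbol A_1}$ (the $(\mathbf{y}-\widetilde{\boldsymbol\gamma})^{\boldsymbol B\boldsymbol\Lambda}$ being units). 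A direct check in the spirit of Lemma~\ref{lem:Gmonloc} then shows that the $\mathbf{x}$ are the exceptional ($u$‑type) variables of $\widetilde\theta$, $z$ is a $w$‑type variable, and each $y_i$ is $w$‑type or $u$‑type according to whether the $i$‑th column of $\boldsymbol B\boldsymbol\Lambda$ vanishes or not; maximality of $\boldsymbol\Lambda$ guarantees no $y_i$ is redundant, so Lemma~\ref{lem:Fi} applies and $(\mathbf{x},\mathbf{y},z,\mathbf{w})$ is a monomial coordinate system.

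Finally, the dichotomy. Given a monomial $\umon{\xi}$ which is not a first integral of $\theta.\mathcal O_p$, Lemma~\ref{lem:CompForm} and the coordinate change give $\umon{\xi}=\mathbf{x}^{\xi\boldsymbol A_1}(\mathbf{y}-\widetilde{\boldsymbol\gamma})^{\xi\boldsymbol\Lambda}=\xmon{\widetilde\xi}U$ with $\widetilde\xi=\xi\boldsymbol A_1$ and $U=(\mathbf{y}-\widetilde{\boldsymbol\gamma})^{\xi\boldsymbol\Lambda}$ a unit. If $\mathbf{x}^{\xi\boldsymbol A_1}$ is not a first integral of $\widetilde\theta$, the first alternative holds. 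Otherwise $\xi\boldsymbol A_1$ lies in the first‑integral lattice of $\widetilde\theta$; since the monomial substitution $\mathbf{u}\mapsto(\mathbf{x},\mathbf{y}-\widetilde{\boldsymbol\gamma})^{[\boldsymbol A_1|\boldsymbol\Lambda]}$ is invertible it preserves monomial first‑integral‑ness, so the failure of $\umon{\xi}$ to be a first integral of $\theta$ must be carried by the $\mathbf{y}$‑exponent — concretely, there is an index $i$ with $(\xi\boldsymbol\Lambda)_i\neq 0$ and with the $i$‑th column of $\boldsymbol B\boldsymbol\Lambda$ equal to zero. The latter makes $\partial_{y_i}$ annihilate $\tau^{\ast}\umon{B}$, so $\partial_{y_i}\in\widetilde\theta$ and it is regular; the former gives $\partial_{y_i}U=(\xi\boldsymbol\Lambda)_i(\mathbf{y}-\widetilde{\boldsymbol\gamma})^{\xi\boldsymbol\Lambda-\mathbf{e}_i}$, a unit. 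That is the second alternative. The main obstacle is precisely this last step: proving that in the ``bad'' case the obstruction is carried \emph{exactly} by a $w$‑type $\mathbf{y}$‑variable along which $U$ is nondegenerate. This requires tracking carefully, through the invertible matrix $[\boldsymbol A_1\,|\,\boldsymbol\Lambda]$, how the kernel lattice of $\theta$ (in $\mathbf{u}$‑exponents) sits inside that of $\widetilde\theta$ (in $(\mathbf{x},\mathbf{y})$‑exponents), and it is here — not in the coordinate construction, which goes through as in Lemma~\ref{lem:Gmonloc} in any case — that the maximal‑rank hypothesis on $\boldsymbol A_1$, and the resulting maximal rank of $\boldsymbol\Lambda$, is genuinely used.
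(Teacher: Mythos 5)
Your construction of \eqref{eq:claim1} and of $\partial_z$ is fine, but the second half of your argument has a genuine gap: the coordinate system you stop at need not be monomial, and your final dichotomy is false as stated. The culprit is the claim that each $y_i$ is $w$-type or $u$-type according to whether the $i$-th column of $\boldsymbol{B\Lambda}$ vanishes, together with the assertion that, when $\mathbf{x}^{\boldsymbol{\xi A}_1}$ \emph{is} a first integral of $\widetilde{\theta}$, there must exist $i$ with $(\boldsymbol{\xi\Lambda})_i\neq 0$ and the $i$-th column of $\boldsymbol{B\Lambda}$ zero. Here is a counterexample. Take coordinates $(u_1,u_2,u_3,v)$, $E=\{u_1u_2u_3=0\}$, and $\theta$ the monomial distribution with complete system of first integrals $(u_1,\,u_2u_3)$, i.e.\ generated by $\partial_v$ and $u_2\partial_{u_2}-u_3\partial_{u_3}$. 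Blow up $\{u_1=u_3=0\}$, then (in the chart $u_1=a_1$, $u_3=a_1a_3$) blow up $\{a_1=u_2=0\}$, then (in the chart $a_1=b_1$, $u_2=b_1b_2$) blow up $\{b_1=v=0\}$; in the chart $b_1=c_1$, $v=c_1c_4$ one has $u_1=c_1$, $u_2=c_1b_2$, $u_3=c_1a_3$, $v=c_1c_4$. At the point $q$ where $c_1=0$ and $b_2,a_3,c_4$ are nonzero, $t=1$, $\boldsymbol{A}_1=(1,1,1)^{T}$ has maximal rank (so this is an instance of the present Lemma), the $y$-variables are the recentered $b_2,a_3$, and $\boldsymbol{B\Lambda}=\bigl[\begin{smallmatrix}0&0\\1&1\end{smallmatrix}\bigr]$ has no zero column. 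Now take $\umon{\xi}=u_2$, which is not a first integral of $\theta$: its total transform is $c_1b_2$, with monomial part $\mathbf{x}^{\boldsymbol{\xi A}_1}=c_1$ and unit $U=b_2$. But $c_1=\tau^{\ast}u_1$ \emph{is} a first integral of $\widetilde{\theta}$, so your first alternative fails; and since no column of $\boldsymbol{B\Lambda}$ vanishes, none of your $\partial_{y_i}$ lies in $\widetilde{\theta}$ (each fails to annihilate $\tau^{\ast}(u_2u_3)=c_1^{2}b_2a_3$), so your second alternative fails as well. Indeed, in your coordinates the only monomial first integrals are the powers of $c_1$ --- one short of a complete system --- so $(\mathbf{x},\mathbf{y},z,\mathbf{w})$ is not a monomial coordinate system at $q$, and the lattice bookkeeping you base the dichotomy on is done in the wrong coordinates.

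This is exactly what the third, fourth and fifth changes of coordinates in the paper's proof are for, and they cannot be skipped. One first absorbs unit factors into $\mathbf{x}$ so that the transformed first integrals take the block form $\mathbf{x}^{\boldsymbol{D}_1'}$, $\mathbf{x}^{\boldsymbol{D}_1''}(\mathbf{y}-\boldsymbol{\gamma})^{\boldsymbol{\Delta}}$ with $\boldsymbol{\Delta}$ of maximal rank, then converts $(\mathbf{y}-\boldsymbol{\gamma})^{\boldsymbol{\Delta}}$ into new coordinates $\mathbf{y}\sm{4}$; this splits the old $\mathbf{y}$-block into coordinates which are themselves first integrals and leftover coordinates $\mathbf{v}\sm{4}$ with $\partial_{v_i}\in\widetilde{\theta}$. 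Only in these coordinates does the dichotomy hold, and it is proved there by the row-span argument: if $\boldsymbol{\xi A}_1$ lay in the span of the rows of $\boldsymbol{D}_1'$ and $\boldsymbol{\xi\Lambda}_v=0$, then $\boldsymbol{\xi}[\boldsymbol{A}_1\,|\,\boldsymbol{\Lambda}]$ would lie in the row span of $\boldsymbol{D}$, contradicting $\boldsymbol{\xi}\notin\mathrm{rowspan}(\boldsymbol{B})$. In the example above the regular vector field promised by the second alternative is the derivative along the leftover direction (differentiating in $a_3$ while keeping $b_2a_3$ and $\mathbf{x}$ fixed), which is not any of your $\partial_{y_i}$; so the dichotomy must be formulated and proved relative to $\boldsymbol{D}_1'$ and $\boldsymbol{\Lambda}_v$ in the re-packaged coordinates, not relative to $\boldsymbol{BA}_1$ and the columns of $\boldsymbol{B\Lambda}$.
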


\begin{lemma}[Case 2]\label{lem:claim2}
Assume that $\pmb{A_1}$ does not have maximal rank. Then, there exists a monomial coordinate system $(\pmb{x},\pmb{y},\pmb{w}) = (x_1,\ldots, x_t,y_{t+1}, \allowbreak \ldots  ,  y_{r+1},\allowbreak \pmb{w})$ centered at $q$ such that
 
\begin{equation}
\label{eq:claim2}
\begin{aligned}
\pmb{u}&=\pmb{x}^{A_1}(\pmb{y}-\pmb{\xi})^{\Lambda}\\
v&=\xmon{\alpha_1}\\
\pmb{w}&=\pmb{w}
\end{aligned}
 \end{equation}
 
where $\xi_{j} \neq 0$ for all $j$, the matrix $\pmb{\Lambda}= (\lambda_{i,j})$ is of maximal rank with entries in $\mathcal{K}$, and $\pmb{\alpha_1}$ doesn't belong to the span of the rows of $\pmb{A_1}$. Moreover, if $\umon{\gamma}$ is \textbf{not} a first integral of ${\omega}\cdot \mathcal{O}_p$, then its total transform $\umon{\gamma} = \xmon{\widetilde{\gamma}}U$, where $U$ is a unit, satisfies one of the following:
\begin{itemize}
 \item Either the monomial $\xmon{\widetilde{\gamma}}$ is \textbf{not} a first integral of $\widetilde{{\omega}}$;
 \item Or, there exists a regular vector-field $\partial_{y_i} \in \widetilde{{\omega}}\cdot \mathcal{O}_q$ such that $\partial_{y_i} U$ is a unit.
\end{itemize}
\end{lemma}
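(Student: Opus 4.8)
The plan is to mirror, with the distinguished coordinate $v$ carried along, the linear-algebra-over-units argument used in the proof of Lemma \ref{lem:Gmonloc}. Throughout write $\widetilde{\mathbf{y}}=(y_{t+1}+\gamma_{t+1},\dots,y_{l+1}+\gamma_{l+1})$ for the tuple of coordinates that are units at $q$, so that in the affine chart $\mathbf{u}=\mathbf{x}^{\boldsymbol{A_1}}\widetilde{\mathbf{y}}^{\boldsymbol{A_2}}$ and $v=\xmon{\alpha_1}\widetilde{\mathbf{y}}^{\boldsymbol{\alpha_2}}$, the factors $\widetilde{\mathbf{y}}^{\boldsymbol{A_2}}$ and $\widetilde{\mathbf{y}}^{\boldsymbol{\alpha_2}}$ being units of $\mathcal{O}_q$. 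Two observations are immediate. First, $\boldsymbol{\alpha_1}\neq 0$: since $\tau(q)=p$ and $v(p)=0$, the function $\tau^{\ast}v=\xmon{\alpha_1}\widetilde{\mathbf{y}}^{\boldsymbol{\alpha_2}}$ vanishes at $q$ while its unit factor does not. Second, because $\boldsymbol{\mathcal{A}}$ is invertible its first $t$ columns are linearly independent, so the $(k+1)\times t$ matrix they form has rank $t$; hence $\boldsymbol{\alpha_1}$ lies in the span of the rows of $\boldsymbol{A_1}$ if and only if $\mathrm{rank}\,\boldsymbol{A_1}=t$, i.e. if and only if $\boldsymbol{A_1}$ has maximal rank. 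As this is excluded by hypothesis, $\boldsymbol{\alpha_1}$ does not lie in the row span of $\boldsymbol{A_1}$, which is one of the assertions.

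Next I would produce the coordinate system. Pick $i_0\le t$ with $\alpha_{1,i_0}\neq 0$ and replace $x_{i_0}$ by $x_{i_0}\cdot\widetilde{\mathbf{y}}^{\boldsymbol{\alpha_2}/\alpha_{1,i_0}}$ — a local change, involving a rational power of a unit, that keeps $x_{i_0}$ a coordinate vanishing at $q$, normalizes $v$ to $\xmon{\alpha_1}$, and only alters $\mathbf{u}$ in its $\widetilde{\mathbf{y}}$-exponent block. I would then repeat the reduction in the proof of Lemma \ref{lem:Gmonloc} verbatim: reorder the rows of the $\mathbf{u}$-exponent matrix so that a top block of $\boldsymbol{A_1}$ realizes $\mathrm{rank}\,\boldsymbol{A_1}$, absorb the corresponding unit factors into suitable $x_i$'s, and use further monomial changes of the unit variables to bring the remaining exponent block to the form $(\mathbf{y}-\boldsymbol{\lambda})^{\boldsymbol{\Lambda}}$ with $\boldsymbol{\Lambda}$ of maximal rank and all $\lambda_i\neq 0$. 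That the resulting $(\mathbf{x},\mathbf{y},\mathbf{w})$ is a monomial coordinate system for $\widetilde{\theta}$ follows from Theorem \ref{thm:AdmCenter} (combinatorial blowings-up are $\theta$-admissible, so $\widetilde{\theta}$ is monomial) together with Lemma \ref{lem:CompForm} and Lemma \ref{lem:Fi}: the transforms $\tau^{\ast}\umon{\beta_j}$ of a complete first integral system of $\theta$ form a complete first integral system of $\widetilde{\theta}$ in these coordinates. In contrast to Lemma \ref{lem:claim1} no extra transverse coordinate $z$ appears here, precisely because $\boldsymbol{\alpha_1}$ is not in the row span of $\boldsymbol{A_1}$, so $v$ cannot be trivialised against the $u_j$'s and is instead normalised to the pure monomial $\xmon{\alpha_1}$.

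Finally, the ``moreover'' clause, which is the crux. Given a monomial $\umon{\xi}$ that is not a first integral of $\theta.\mathcal{O}_p$, Lemma \ref{lem:CompForm} gives $\umon{\xi}=\xmon{\widetilde{\xi}}U$ with $\widetilde{\xi}=\boldsymbol{A_1}^{\top}\boldsymbol{\xi}$ and $U=(\mathbf{y}-\boldsymbol{\lambda})^{\boldsymbol{\Lambda}^{\top}\boldsymbol{\xi}}$ a unit. Since a sequence of combinatorial blowings-up is an isomorphism off the exceptional divisor, a function $g$ near $p$ has $\tau^{\ast}g$ a first integral of $\widetilde{\theta}$ near $q$ if and only if $g$ is a first integral of $\theta$; in particular $\tau^{\ast}\umon{\xi}$ is not a first integral of $\widetilde{\theta}$ near $q$. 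Moreover the map $\boldsymbol{\xi}\mapsto(\boldsymbol{A_1}^{\top}\boldsymbol{\xi},\boldsymbol{\Lambda}^{\top}\boldsymbol{\xi})$ is injective, because the reduced $\mathbf{u}$-exponent matrix $[\boldsymbol{A_1}\mid\boldsymbol{\Lambda}]$ still has rank $k$ — this is inherited from $\boldsymbol{A}$ being the top $k$ rows of the invertible matrix $\boldsymbol{\mathcal{A}}$, the reductions above preserving rank. I would then argue: if $\xmon{\widetilde{\xi}}$ is a first integral of $\widetilde{\theta}$, then since $U$ is a unit and $X(\xmon{\widetilde{\xi}})=0$ for every $X\in\widetilde{\theta}$, the failure of $\tau^{\ast}\umon{\xi}$ to be a first integral is carried entirely by $U$, i.e. $X(U)\not\equiv 0$ for some $X\in\widetilde{\theta}$; examining $X$ in the monomial normal form of $\widetilde{\theta}$ and using the injectivity above, one locates a regular $\partial_{y_i}\in\widetilde{\theta}.\mathcal{O}_q$ with $(\boldsymbol{\Lambda}^{\top}\boldsymbol{\xi})_i\neq 0$, and then $\partial_{y_i}U$ is a unit. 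The main obstacle is exactly this last step: one must compare, through the combinatorial transformation and by Lemma \ref{lem:Fi}, the first-integral lattice of $\theta$ with that of $\widetilde{\theta}$ — deciding which pure $\mathbf{x}$-monomials are first integrals of $\widetilde{\theta}$ and which $y_i$ give $\widetilde{\theta}$-regular directions — and rule out that the obstruction hides in an Euler-type vector field rather than a coordinate direction. Everything else is the normal-form bookkeeping already carried out in Lemma \ref{lem:Gmonloc} and Lemma \ref{lem:claim1}.
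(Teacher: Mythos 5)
The first half of your proposal — normalizing $v$ to the pure monomial $\xmon{\alpha_1}$ by absorbing the unit $\widetilde{\mathbf{y}}^{\boldsymbol{\alpha_2}}$ into an $x_{i_0}$ with $\alpha_{1,i_0}\neq 0$, deducing $\boldsymbol{\alpha_1}\notin\operatorname{span}$ of the rows of $\boldsymbol{A_1}$ from the invertibility of $\boldsymbol{\mathcal{A}}$, and then running the reduction of Lemma \ref{lem:Gmonloc} — is essentially the paper's route (its first, second and third changes of coordinates). The one point you pass over there is that the later unit-absorbing modifications of the $\mathbf{x}$-variables must be chosen so as not to destroy $v=\xmon{\alpha_1}$; the paper secures this exactly from the fact that $\boldsymbol{\alpha_1}$ is not in the row span of $\boldsymbol{C^{'}_1}$, so you should make that explicit rather than invoke Lemma \ref{lem:Gmonloc} ``verbatim''.

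The genuine gap is the ``moreover'' dichotomy, and you flag it yourself: your argument stops at ``$X(U)\not\equiv 0$ for some $X\in\widetilde{\theta}$'' and the hope that this $X$ can be traded for a regular coordinate direction. That step does not go through as stated: it neither separates the unit variables into the block that consists (after translation) of first integrals of $\widetilde{\theta}$ (for which $\partial_{y_i}\notin\widetilde{\theta}$) from the transverse block, nor excludes that the obstruction sits in an Euler-type field, nor gives the required conclusion that $\partial_{y_i}U$ is a \emph{unit} at $q$ (the lemma needs nonvanishing at $q$, not just $\partial_{y_i}U\not\equiv 0$). The paper avoids vector fields entirely here: after its third change of coordinates the unit variables split as $(\mathbf{y}\sm{3},\mathbf{z}\sm{3})$ with $(\mathbf{x}\sm{3}^{\boldsymbol{D^{'}_1}},\mathbf{y}\sm{3})$ first integrals of $\widetilde{\theta}.\mathcal{O}_q$, hence $\partial_{z_i}\in\widetilde{\theta}.\mathcal{O}_q$, and all translation constants $\boldsymbol{\gamma}_2\sm{3}$ nonzero; then $\umon{\xi}=\mathbf{x}\sm{3}^{\boldsymbol{\xi}\boldsymbol{A_1}}(\mathbf{y}\sm{3}-\boldsymbol{\gamma}_1\sm{3})^{\boldsymbol{\xi}\boldsymbol{\Lambda}_1\sm{3}}(\mathbf{z}\sm{3}-\boldsymbol{\gamma}_2\sm{3})^{\boldsymbol{\xi}\boldsymbol{\Lambda}_2\sm{3}}$, and the dichotomy is pure linear algebra on exponents: if $\boldsymbol{\xi}\boldsymbol{A_1}$ lay in the row span of $\boldsymbol{D^{'}_1}$ \emph{and} $\boldsymbol{\xi}\boldsymbol{\Lambda}_2\sm{3}=0$, then $\boldsymbol{\xi}\,[\boldsymbol{A_1}\ \boldsymbol{\Lambda}]$ would lie in the row span of $\boldsymbol{D}=\boldsymbol{B}\,[\boldsymbol{A_1}\ \boldsymbol{\Lambda}]$, and the injectivity of right multiplication by $[\boldsymbol{A_1}\ \boldsymbol{\Lambda}]$ (precisely the rank-$k$ remark you already made) would force $\boldsymbol{\xi}$ into the row span of $\boldsymbol{B}$, contradicting that $\umon{\xi}$ is not a first integral of $\theta.\mathcal{O}_p$; while if $\boldsymbol{\xi}\boldsymbol{\Lambda}_2\sm{3}\neq 0$, any $z_i$ with nonzero exponent gives $\partial_{z_i}U$ a unit because $\gamma_{2,i}\sm{3}\neq 0$. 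So the ingredient you isolated is the right one, but you did not carry it to the conclusion, and this dichotomy is exactly the part of the lemma used later in Lemma \ref{lem:Dropping}; as written, the proposal leaves the crux unproven.
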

 
\begin{proof}[Proof of Lemma \ref{lem:claim1}]
By hypothesis, apart from re-indexing of the $u_j$'s, we can write 
\[
\pmb{A_1}=\begin{bmatrix}\pmb{A_1^{'}}\\\pmb{A_1^{''}}\end{bmatrix},\ \pmb{A_2}=\begin{bmatrix}\pmb{A_2^{'}}\\\pmb{A_2^{''}}\end{bmatrix},
\]
where $\text{det}(\pmb{A_1^{'}})\neq0$, and $\pmb{A_1^{'}}$ and $\pmb{A_2^{'}}$ have the same height. So, we can write equations \eqref{eq:blowingsup} in the compact form
\begin{align}\notag
\pmb{u}'\phantom{'}&=\pmb{x}^{\pmb{A_1^{'}}}(\pmb{y}-\pmb{\xi})^{\pmb{A_2^{'}}}\\ \notag
\pmb{u}''&=\pmb{x}^{\pmb{A_1^{''}}}(\pmb{y}-\pmb{\xi})^{\pmb{A_2^{''}}}\\ \notag
v\phantom{''}&=\pmb{x}^{\pmb{\alpha_1}}(\pmb{y}-\pmb{\xi})^{\pmb{\alpha_2}}
\end{align}

\noindent\emph{First change of coordinates:} 
\begin{align*}
\pmb{x}\sm{1}&=\pmb{x}\cdot (\pmb{y}-\pmb{\xi})^{(\pmb{A_1^{'}})^{-1}\pmb{A_2^{'}}}\\
\pmb{y}\sm{1}&=\pmb{y}
\end{align*}
After this change of coordinates we get (using Lemma \ref{lem:CompForm})
\begin{equation}\label{eq:afterchangecoord1}
\begin{aligned}
\pmb{u}'\phantom{'}&=\pmb{x}\sm{1}^{\pmb{A_1^{'}}}\\
\pmb{u}''&=\pmb{x}\sm{1}^{\pmb{A_1^{''}}}(\pmb{y}\sm{1}-\pmb{\xi})^{\pmb{\Lambda}\sm{1}}\\
v\phantom{''}&=\pmb{x}\sm{1}^{\pmb{\alpha_1}}(\pmb{y}\sm{1}-\pmb{\xi})^{\pmb{\lambda}\sm{1}}
\end{aligned}
\end{equation}
and the square matrix $\mathcal{L}:=\begin{bmatrix}\pmb{\Lambda}\sm{1}\\ \pmb{\lambda}\sm{1}\end{bmatrix}:=\begin{bmatrix}\pmb{A_2^{''}}-\pmb{A_1^{''}}(\pmb{A_1^{'}})^{-1}\pmb{A_2^{'}}\\ \pmb{\alpha_2}-\pmb{\alpha_1}(\pmb{A_1^{'}})^{-1}\pmb{A_2^{'}}\end{bmatrix}$ has determinant different from zero because the full matrix of exponents in \eqref{eq:afterchangecoord1} is obtained from $\pmb{\mathcal{A}}$ by a sequence of column elementary transformations. Note also that the entries of $(\pmb{A_1^{'}})^{-1}\pmb{A_2^{'}}$ are rational numbers, not necessarily integers.
\medskip

\noindent\emph{Second change of coordinates:} After re-indexing the $y_i\sm{1}-\xi_i$ we can assume that the elements of the diagonal of $\mathcal{L}$ are different from zero. Thus, we consider
\begin{equation}
\begin{aligned}
\pmb{y}\sm{2}-\pmb{\xi}\sm{2}&=(\pmb{y}\sm{1}-\pmb{\xi})^{\pmb{\Lambda}\sm{1}}\\
z\sm{2}-\zeta\sm{2}&=(\pmb{{y}}\sm{1}-\pmb{\xi})^{\pmb{\lambda}\sm{1}}\\
\pmb{x}\sm{2}&=\pmb{x}\sm{1}
\end{aligned}
\end{equation}
so to get
\begin{equation}\label{eq:afterchangecoord2}
\begin{aligned}
\pmb{u}'\phantom{'}&=\pmb{x}\sm{2}^{\pmb{A_1^{'}}}\\
\pmb{u}''&=\pmb{x}\sm{2}^{\pmb{A_1^{''}}}(\pmb{y}\sm{2}-\pmb{\xi}\sm{2})^{\pmb{Id}}\\ \notag
v\phantom{''}&=\pmb{x}\sm{2}^{\pmb{\alpha_1}}(z\sm{2}-\zeta\sm{2})
\end{aligned}
\end{equation}
where $\pmb{Id}$ is the identity matrix.
\medskip

\noindent\emph{Third change of coordinates:} We need to guarantee that the coordinate system is monomial. Consider a complete system of first integrals $\umon{B}$ and note that the matrix $\pmb{B}$ can be written as:
\[
\pmb{B} = \begin{bmatrix}
      \pmb{B_1 }& \pmb{B_2}
     \end{bmatrix}
\]
where $\pmb{B_2}$ is a $r\times (r-t)$ matrix. With this notation, by Lemma \ref{lem:CompForm}:
\[
 \umon{B} = \pmb{x}\sm{2}^{\pmb{B A_1}}(\pmb{y}\sm{2} - \pmb{\xi}\sm{2})^{\pmb{B_2}}  =\pmb{x}\sm{2}^{\pmb{C_1}}(\pmb{y}\sm{2} - \pmb{\xi}\sm{2})^{\pmb{C_2}}
\]
where $\pmb{C_1} =\pmb{B A_1}$ is a non-zero matrix (since $\pmb{B}$ and $\pmb{A_1}$ are of maximal rank) and $\pmb{C_2} =\pmb{B_2}$. Now, we perform a change of coordinates similar with the one given in Lemma \ref{lem:Gmonloc} in order to obtain a monomial coordinate system. To that end, consider:
\[
 \pmb{C} = \begin{bmatrix}
 \pmb{C^{'}_1}&\pmb{C^{'}_2} \\
 \pmb{C^{''}_1}&\pmb{C^{''}_2}
\end{bmatrix}
\]
where $ \pmb{C_1} = \begin{bmatrix} \pmb{C^{'}_1} \\ \pmb{C^{''}_1}\end{bmatrix}$ and the rank of $\pmb{C^{'}_1}$ is maximal and equal to the rank of $\pmb{C_1}$. So, there exists a change of coordinates $(\pmb{x}\sm{3},\pmb{y}\sm{3},z\sm{3},\pmb{w}\sm{3})$ where $z\sm{3} = z\sm{2}$, such that:
\[
 \umon{B} = \pmb{x}\sm{3}^{\pmb{D_1}}(\pmb{y}\sm{3}-\pmb{\xi}\sm{3})^{\pmb{D_2}}
\]
where:
\[
 \pmb{D} = \begin{bmatrix}
                   \pmb{D_1} & \pmb{D_2}
                  \end{bmatrix} = \begin{bmatrix}
 \pmb{D^{'}_1}&\pmb{D^{'}_2} \\
 \pmb{D^{''}_1}&\pmb{D^{''}_2}
\end{bmatrix} = \begin{bmatrix}
 \pmb{C^{'}_1}& 0  \\
 \pmb{C^{''}_1}&\pmb{\Delta}
\end{bmatrix}
\]
where $\pmb{\Delta}$ is a maximal rank matrix with coefficients in $\mathcal{K}$. This implies that the collection $(\pmb{x}\sm{3}^{\pmb{D_1^{'}}},\pmb{x}\sm{3}^{\pmb{D_1^{''}}}(\pmb{y}\sm{3}-\pmb{\xi}\sm{3})^{\pmb{\Delta}} )$ is a collection of first integrals of ${\omega}\cdot \mathcal{O}_q$.\\
\\
\noindent\emph{Fourth change of coordinates:} Let $s$ be the rank of $\pmb{\Delta}$. Then, apart from re-ordering the $\pmb{y}\sm{3}$ coordinates, there exists a coordinate system $(\pmb{x}\sm{4},\pmb{y}\sm{4},\pmb{v}\sm{4},z\sm{4}, \allowbreak \pmb{w}\sm{4})$ where $\pmb{x}\sm{4} = \pmb{x}\sm{3}$, $z\sm{4}=z\sm{3}$, $\pmb{w}\sm{4}=\pmb{w}\sm{3}$ and $\pmb{v}\sm{4} = (y_{t+s+1}\sm{3},\dots, y_{r+1}\sm{3})$ such that:
\[
 (\pmb{y}\sm{3}-\pmb{\xi}\sm{3})^{\pmb{\Delta}} = \pmb{y}\sm{4}-\pmb{\xi}_y\sm{4}
\]
where $\pmb{y}\sm{4} = (y_{t+1}\sm{4},\dots, y_{s}\sm{4})$,
which implies that the monomial functions 
\[
(\pmb{x}\sm{4}^{\pmb{D_1^{'}}},\pmb{y}\sm{4}) 
\]
are first integrals of ${\omega}\cdot \mathcal{O}_q$, which guarantees that the coordinate system is monomial. Furthermore, since $z\sm{2}=z\sm{4}$ we finally conclude that:
\[
\begin{aligned}
\pmb{u}&=\pmb{x}\sm{4}^{\pmb{A_1}}(\pmb{y}\sm{4}-\pmb{\xi}_y\sm{4})^{\pmb{\Lambda}_y\sm{4}}(\pmb{v}\sm{4}-\pmb{\xi}_v\sm{4})^{\pmb{\Lambda}_v\sm{4}}\\
v&=\pmb{x}\sm{4}^{\pmb{\alpha_1}}(\pmb{y}\sm{4}-\pmb{\xi}_y\sm{4})^{\pmb{\lambda}_y\sm{4}}(\pmb{v}\sm{4}-\pmb{\xi}_v\sm{4})^{\pmb{\lambda}_v\sm{4}} (z\sm{4}-\zeta\sm{4})\\
\pmb{w}&=\pmb{w}
\end{aligned}
\]
where $\pmb{\Lambda}\sm{4} = [\pmb{\Lambda}_y\sm{4},\pmb{\Lambda}_v\sm{4}]$ is a maximal rank matrix whose entries are in $\mathcal{K}$.\\
\\
\noindent\emph{Fifth change of coordinates:} We only need a change in the $z\sm{4}$ so that 
\[
z\sm{5}-\zeta\sm{5} = (\pmb{y}\sm{4}-\pmb{\xi}_y\sm{4})^{\pmb{\lambda}_y\sm{4}}(\pmb{v}\sm{4}-\pmb{\xi}_v\sm{4})^{\pmb{\lambda}_v\sm{4}} (z\sm{4}-\zeta\sm{4})                                                                                                                                                                                                                                                                             \]
which does not change the fact that the coordinate system is monomial. This is the coordinate system of the enunciate of the Lemma.\\
\\
Now, let $\umon{\gamma}$ be a monomial which is \textbf{not} a first integral of ${\omega}\cdot \mathcal{O}_p$, i.e., the multi-index $\pmb{\gamma}$ doesn't belong to the span of the rows of $\pmb{B}$. In this case, in the coordinate system of the enunciate of the Lemma, we have that:
\[
 \umon{\gamma} = \pmb{x}^{\pmb{\gamma}\pmb{A}_1}(\pmb{y}-\pmb{\xi})^{\pmb{\gamma}\pmb{\Lambda}} 
\]
In particular, there exists a vector field {$\widetilde{\partial}$} in ${\omega} \cdot \mathcal{O}_p$ such that $\widetilde{\partial}(\umon{\gamma})\neq 0$. Now, recall that $\pmb{x}^{\pmb{A_1B}}$ are monomial first integrals of ${\omega} \cdot \mathcal{O}_q$. So, either $\pmb{\gamma A_1}$ is not in the subspace generated by the rows of $\pmb{A_1B}$ (and $\pmb{x}^{\pmb{\gamma}\pmb{A}_1}$ is not a first integral), or it is and:
\[
\partial(\pmb{x}^{\pmb{\gamma}\pmb{A}_1})\equiv 0 \text{ for all }\partial \in {\omega}_q
\]
In this case, we conclude that:
\[
{\sigma^{\ast}(\widetilde{\partial})}(\pmb{y}-\pmb{\xi})^{\pmb{\gamma}\pmb{\Lambda}}  \neq 0
\]
which implies that $\pmb{\gamma}\pmb{\Lambda}\neq 0$ {and concludes the Lemma}.
\end{proof}
 
\begin{proof}[Proof of Lemma \ref{lem:claim2}]
We have 
\begin{align}\notag
\umon{B}&=\pmb{x}^{\pmb{B A_1}}(\pmb{y}-\pmb{\xi})^{\pmb{B A_2}}\\ \notag
\pmb{u}\phantom{^B}&=\pmb{x}^{\pmb{A_1}}(\pmb{y}-\pmb{\xi})^{\pmb{A_2}}\\ \notag
v\phantom{^B}&=\pmb{x}^{\pmb{\alpha_1}}(\pmb{y}-\pmb{\xi})^{\pmb{\alpha_2}},
\end{align}
Note that, since $\pmb{\mathcal{A}}$ is of maximal rank but $\pmb{A_1}$ does not have maximal rank, $\pmb{\alpha_1}$ doesn't belong to the span of the rows of $\pmb{A_1}$. Thus, it does not belong to the span of the rows of $\pmb{B A_1}$.\\
\\
\noindent\emph{First change of coordinates:} There exists a coordinate system $(\pmb{x}\sm{1},\pmb{y}\sm{1},\pmb{w}\sm{1})$ where $\pmb{y}\sm{1}=\pmb{y}$ and $\pmb{w}\sm{1}=\pmb{w}$ such that
\begin{align}\notag
\umon{B}&=\pmb{x}\sm{1}^{\pmb{C_1}}(\pmb{y}\sm{1}-\pmb{\xi})^{\pmb{C_2}}\\ \notag
\pmb{u}\phantom{^B}&=\pmb{x}\sm{1}^{\pmb{A_1}}(\pmb{y}\sm{1}-\pmb{\xi})^{\pmb{\Lambda}\sm{1}}\\ \notag
v\phantom{^B}&=\pmb{x}\sm{1}^{\pmb{\alpha_1}}
\end{align}
where $\pmb{C} = \begin{bmatrix}
                         \pmb{C_1}\\ \pmb{C_2}
                        \end{bmatrix}=\begin{bmatrix}
                         \pmb{B A_1}\\ \pmb{C_2}
                        \end{bmatrix}
$ and $\pmb{\Lambda}\sm{1}$ are matrices of maximal rank with coefficients in $\mathbb{Q}$.\\
\\
\noindent\emph{Second change of coordinates:} We need to guarantee that the coordinate system is monomial. To that end, consider:
\[
 \pmb{C} = \begin{bmatrix}
 \pmb{C^{'}_1}&\pmb{C^{'}_2} \\
 \pmb{C^{''}_1}&\pmb{C^{''}_2}
\end{bmatrix}
\]
where $ \pmb{C_1} = \begin{bmatrix} \pmb{C^{'}_1} \\ \pmb{C^{''}_1}\end{bmatrix}$ and the rank of $\pmb{C^{'}_1}$ is maximal and equal to the rank of $\pmb{C_1}$. Since $\pmb{\alpha_1}$ does not belong to the span of the rows of $\pmb{C^{'}_1}$, there exists a change of coordinates $(\pmb{x}\sm{2},\pmb{y}\sm{2},\pmb{w}\sm{2})$ where $v=\pmb{x}\sm{2}^{\pmb{\alpha_1}}$, such that:
\[
\begin{aligned}
\umon{B}&=\pmb{x}\sm{2}^{\pmb{D_1}}(\pmb{y}\sm{2}-\pmb{\xi}\sm{2})^{\pmb{D_2}}\\
\pmb{u}\phantom{^B}&=\pmb{x}\sm{2}^{\pmb{A_1}}(\pmb{y}\sm{2}-\pmb{\xi}\sm{2})^{\pmb{\Lambda}\sm{2}}\\ 
v\phantom{^B}&=\pmb{x}\sm{2}^{\pmb{\alpha_1}}
\end{aligned}
\]
where $\pmb{\Lambda}\sm{2}$ is a maximal rank matrix with entries in $\mathcal{K}$ and
\[
 \pmb{D} = \begin{bmatrix}
                   \pmb{D_1} & \pmb{D_2}
                  \end{bmatrix} = \begin{bmatrix}
 \pmb{D^{'}_1}&\pmb{D^{'}_2} \\
 \pmb{D^{''}_1}&\pmb{D^{''}_2}
\end{bmatrix} = \begin{bmatrix}
 \pmb{C^{'}_1}& 0  \\
 \pmb{C^{''}_1}&\pmb{\Delta}
\end{bmatrix}
\]
where $\pmb{\Delta}$ is a maximal rank matrix with entries in $\mathcal{K}$. This implies that the collection $(\pmb{x}\sm{2}^{\pmb{D_1^{'}}},\pmb{x}\sm{2}^{\pmb{D_1^{''}}}(\pmb{y}\sm{2}-\pmb{\xi}\sm{2})^{\pmb{\Delta}} )$ is a collection of first integrals of ${\omega}\cdot \mathcal{O}_q$. Since $\pmb{D_1^{'}}$ has rank equal to $\pmb{D_1}$, we conclude that:
\[
(\pmb{x}\sm{2}^{\pmb{D_1^{'}}},(\pmb{y}\sm{2}-\pmb{\xi}\sm{2})^{\pmb{\Delta}})
\]
is another collection of first integrals of ${\omega}\cdot \mathcal{O}_q$.\\
\\
\noindent\emph{Third change of coordinates:} Since $\pmb{\Delta}$ is of maximal rank, there exists a coordinate system $(\pmb{x}\sm{3},\pmb{y}\sm{3},\pmb{z}\sm{3},\pmb{w}\sm{3})$ where $\pmb{x}\sm{3} = \pmb{x}\sm{2}$ and $\pmb{w}\sm{3}=\pmb{w}\sm{2}$ such that:
\[
 (\pmb{y}\sm{2}-\pmb{\xi}\sm{2})^{\pmb{\Delta}} = \pmb{y}\sm{3}-\pmb{\xi}\sm{3}
\]
which finally implies that the monomial functions 
\[
(\pmb{x}\sm{3}^{\pmb{D_1^{'}}},\pmb{y}\sm{3})
\]
are fisrt integrals of ${\omega}\cdot \mathcal{O}_q$. This implies that this coordinate system is monomial. Furthermore, since $\pmb{x}\sm{2}^{\pmb{\alpha_1}}$ is independent of the $\pmb{y}\sm{2}$ coordinate, we finally conclude that:
\[
\begin{aligned}
\pmb{u}&=\pmb{x}\sm{3}^{\pmb{A_1}}(\pmb{y}\sm{3}-\pmb{\xi}_1\sm{3})^{\pmb{\Lambda}_1\sm{3}}(\pmb{z}\sm{3}-\pmb{\xi}_2\sm{3})^{\pmb{\Lambda}_2\sm{3}}\\ 
v&=\pmb{x}\sm{3}^{\pmb{\alpha_1}}\\
\pmb{w}&=\pmb{w}\sm{3}
\end{aligned}
\]
where $\pmb{\Lambda}=\begin{bmatrix}
                             \pmb{\Lambda_1} & \pmb{\Lambda_2}
                            \end{bmatrix}
$ is a maximal rank matrix with entries in $\mathcal{K}$ and $\pmb{\xi}\sm{3} = (\pmb{\xi}_1\sm{3},\pmb{\xi}_2\sm{3})$ is a vector where no entry is zero. This proves that the coordinate system is monomial, {and this is the coordinate system of the enunciate of the Lemma}.

{The rest of the argument (in respect to a monomial $\umon{\gamma}$ which is \textbf{not} a first integral of ${\omega}\cdot \mathcal{O}_p$) is exactly the same as in the proof of Lemma \ref{lem:claim1}.}
\end{proof}

\subsection{Proof of Proposition \ref{prop:Dropping}}

By hypothesis, there exists a local coordinate system $(\pmb{u},v,\pmb{w})$ that satisfies the prepared normal form at $p$ with $\nu = \nu(p,{\omega},\mathcal{D})$. In particular, apart from re-indexing, we assume that $T_1$ satisfies the conclusions of proposition \ref{prop:preparation}. Since ${\omega}$ is $\mathcal{K}$-monomial, by Lemma \ref{lem:Fi}, there exists $m-d$ monomials $\umon{B} = (\umon{\beta_1},\dots,\allowbreak \umon{\beta_{m-d}})$, such that
 \[
{\omega}\cdot \mathcal{O}_p =\{\partial \in Der_p(-logE);\text{ } \partial(\umon{\beta_i})\equiv 0 \text{ for all } i\}
\]
Let us now consider the ideal $\mathcal{J}$ generated by:
\[
v^\nu, \text{ and } \{v^j \pmb{u}^{\pmb{r}_{j}}b_{j}\}_{1\leq j<d}, \text{ and } \pmb{u}^{\pmb{\beta}}
\]
where we recall that all $b_{j}$ are either units or zero for $j>0$ and $\pmb{u}^{\pmb{\beta}}$ is in the ideal, provided that $b_{0}\neq 0$ (note that we include only the monomial $\umon{\beta}$ and not $\umon{\beta}w_{{m}}^{\epsilon}$ in the ideal). Now, consider a sequence of blowings-up:
\[
\pmb{\tau}: (M_r,{\omega}_r,\mathcal{D}_r,E_r) \rightarrow (M_0,{\omega}_0,\mathcal{D}_0,E_0)
\]
that principalize $\mathcal{J}$, where $M_0$ is any fixed open neighborhood of $p$ where $\mathcal{J}$ is well-defined. Since $\mathcal{J}$ is generated by monomials in the variables $\pmb{u}$ and $v$, this sequence can be chosen to be combinatorial with respect to the divisor $F:= \{u_1 \cdots u_l \cdot  v =0 \} $ (see Definition \ref{def:combblowingsup}). In particular, the sequence $\tau$ is ${\omega}$-admissible.\\
\\
Now, let $q$ be a point of $M_r$ in the pre-image of $p$. We claim that 
\[
\nu(q,{\omega}_r,\mathcal{D}_r) < \nu(p,{\omega},\mathcal{D})
\]
which finishes the proof of the Proposition. Indeed, since $\pmb{\tau}$ is a sequence of combinatorial blowings-up in respect to the divisor $F$, the point $q$ satisfies the hypothesis of either lemma \ref{lem:claim1} or \ref{lem:claim2}. Thus, we have two cases to consider: \medskip

\noindent\emph{Case 1:} We assume we are in conditions of lemma \ref{lem:claim1}. There exists a monomial system of coordinates $(\pmb{x},\pmb{y},z,\pmb{w}) = (x_1,\ldots, x_t,y_{t+1},\ldots ,\allowbreak y_{r},z,\pmb{w})$ centered at $q$ such that
\begin{equation}
 \label{eq:FC}
\begin{aligned}
\pmb{u}&=\pmb{x}^{\pmb{A}_1}(\pmb{y}-\pmb{\xi})^{\pmb{\Lambda}}\\
v&=\xmon{\alpha_1}(z-\zeta)\\
\pmb{w}&=\pmb{w}
\end{aligned}
\end{equation}
where $\xi_{j} \neq 0$ for all $j$ and the matrix $\pmb{\Lambda}= (\lambda_{i,j})$ of exponents has maximal rank, with $\lambda_{i,j}\in\mathcal{K}$. In particular, $\partial_z$ is contained in $\widetilde{{\omega}}\cdot \mathcal{O}_q$ (this follows from the above coordinate change). So, after blowing-up we have the following expressions:
\begin{equation}
\begin{aligned}
\tau^{\ast}T_1&=U\pmb{x}^{S_\nu}(z-\zeta)^\nu+\sum_{=1}^{\nu-1}\pmb{x}^{S_{j}}(z-\zeta)^ic_{j}(\pmb{x},\pmb{y},\pmb{w})+\pmb{x}^{S_{0}}c_{0}(\pmb{x},\pmb{y},\pmb{w})
\end{aligned}
\label{eq:FCN}
\end{equation}
where:
\begin{itemize}
 \item The function $U$ is a unit of the form $\widetilde{U}(\pmb{x},\pmb{y},\pmb{w}) + \pmb{x}^{\pmb{\alpha_1}} \Omega(\pmb{x},\pmb{y},z,\pmb{w})$, where $\widetilde{U}(\pmb{x},\pmb{y},\pmb{w})$ is a unit and $\pmb{\alpha_1} \neq 0$ (because $q$ is in the pre-image of $p$);
\item For $j>0$ the functions $c_{j}$ are either zero or units (that don't depend on $z$);
\item The term $\pmb{x}^{S_0}c_{0}$ is the pullback of $b_{0}$. In particular, either $c_{0} = 0$ or it is equal to $w_{{m}}^{\epsilon} \widetilde{c}_{0}$ where $\widetilde{c}_{0}$ is a unit.
\end{itemize}

We consider three cases depending on which generator of $\mathcal{I}$ pulls back to be a generator of the pull-back of $\mathcal{I}^*$:\medskip

\noindent{\emph{Case 1.1:}}[The pull back of $v^{\nu}$ generates $\mathcal{J}^{\ast}$, i.e. $S_{\nu}=\min\{S_{\nu},S_{j},S_0\}$]
In this case, by equation \eqref{eq:FCN}, we have:

\[
\begin{aligned}
\tau^{\ast}T_1=\pmb{x}^{S_{\nu}}&\left[\left(\widetilde{U}z+\widetilde{U}\zeta \nu + \pmb{x}^{\pmb{\alpha_1}}\Omega_2\right)z^{\nu-1} + \right.\\
& \left. + \text{ terms where the exponent of }z\text{ is }<\nu-1 \right]
\end{aligned}
\]
where $\pmb{\alpha_1}$ is a non-zero matrix and $\Omega_2 = [z+ \zeta \nu]\Omega$. Since $\widetilde{U}z+\widetilde{U} \nu \zeta + \pmb{x}^{\pmb{\alpha_1}} \Omega_2$ is a unit and the vector-field $\partial_z$ belongs to ${\omega}_r$, we conclude that $\nu(q,{\omega}_r,\mathcal{D}_r) \leq \nu-1$.\medskip

\noindent{\emph{Case 1.2:}}[There is a maximum $0 < j_1< d$ such that the pull back of $\pmb{u}^{\pmb{r}_{j_1}}v^{j_1}$ generates $\mathcal{J}^{\ast}$, i.e $S_{j_1}=\min\{S_{\nu},S_{j},S_0\}$, $S_{\nu}> S_{j_1}$ and $S_{j}> S_{j_1}$ for $j>j_1$]. In this case, by equation \eqref{eq:FCN}, we have:
\[
\tau^{\ast}T_{1}=\xmon{S_{j_1}}\left[(z-\zeta)^{j_1}c_{j_1}+\sum_{j=0}^{j_1-1}\xmon{S_{j}-S_{j_1}}(z-\zeta)^{j}c_{j}+\Omega(\pmb{x},\pmb{y},z,\pmb{w}) \right]
\]
where $\Omega( \pmb{0} ,\pmb{y},z,\pmb{w}) \equiv 0$.  Since $c_{j_1}$ is an unit and the vector-field $\partial_z$ belongs to ${\omega}_r$, we conclude that $\nu(q,{\omega}_r,\mathcal{D}_r) \leq j_1< \nu$.\medskip

\noindent{\emph{Case 1.3:}}[The pull-back of $\umon{\beta}$ is the \textit{only} generator of $\mathcal{J}^{\ast}$ i.e. $S_0=\min\{S_{\nu},S_{j},S_0\}$ and $S_{\nu} >S_0$, $S_{j}>S_0$] 
In this case, we consider two cases depending on $\epsilon$: \medskip

\noindent{\emph{Case 1.3a, $\epsilon=1$:}} Then 
\[
\tau^{\ast}T_{1}=\pmb{x}^{S_0}\left[w_{{m}}\widetilde{c}_{0}+\Omega(\pmb{x},\pmb{y},z,\pmb{w})\right]
\]
where $\widetilde{c}_{0}$ is a unit and $\Omega(\pmb{0},\pmb{y},z,\pmb{w})\equiv0$. Since the vector-field $\partial_{w_{{m}}}$ belongs to ${\omega}_r \cdot \mathcal{O}_q$, we conclude that that $\nu(q,{\omega}_r,\mathcal{D}_r) \leq 1 < \nu$.\medskip

\noindent{\emph{Case 1.3b, $\epsilon=0$:}} In this case, note that the monomial $\umon{\beta+\delta}$ is \textbf{not} a first integral of ${\omega}\cdot \mathcal{O}_p$. Thus, Lemma \ref{lem:claim1} guarantees that the total transform $\umon{\beta + \delta} = \pmb{x}^{S_0 + \widetilde{\delta}}\widetilde{W} $, where $\widetilde{W} = (\pmb{y}-\pmb{\gamma})^{\pmb{(\delta+\beta) \Lambda}}$ is a unit, satisfies one of the following:
\begin{itemize}
 \item Either $\pmb{x}^{S_0 + \pmb{\widetilde{\delta}}}$ is \textbf{not} a first integral of $\widetilde{{\omega}}\cdot \mathcal{O}_q$, which implies that
 \[
\tau^{\ast}[\umon{\delta} T_{1}]=\pmb{x}^{S_0+\pmb{\widetilde{\delta}}}U
\]
for some unit $U$. We conclude that that $\nu(q,{\omega}_r,\mathcal{D}_r) = 0 < \nu$;
\item Or, there exists a regular vector-field $\partial_{y_i} \in \widetilde{{\omega}}\cdot \mathcal{O}_q$ such that $\partial_{y_i}\widetilde{W}$ is a unit. In particular:
\[
\tau^{\ast}[\umon{\delta} T_{1}]=   \pmb{x}^{S_0+\pmb{\widetilde{\delta}}}\widetilde{W}(0)+  \pmb{x}^{S_0+\pmb{\widetilde{\delta}}}\left[\widetilde{W}-\widetilde{W}(0) + \Omega(\pmb{x},\pmb{y},z,\pmb{w})\right]
\]
where $\Omega( \pmb{0} ,\pmb{y},z,\pmb{w}) \equiv 0$ and the monomial $\pmb{x}^{S_0+\pmb{\widetilde{\delta}}}W(0)$ is a first integral of $\widetilde{{\omega}}\cdot \mathcal{O}_q$. We conclude that that $\nu(q,{\omega}_r,\mathcal{D}_r) \leq 1 < \nu$.
\end{itemize}
\medskip

\noindent\emph{Case 2:} We assume we are in conditions of Lemma \ref{lem:claim2}. There exists a monomial system of coordinates $(\pmb{x},\pmb{y},\pmb{w}) = (x_1,\ldots, x_t,y_{t+1},\ldots ,\allowbreak y_{r+1},\pmb{w})$ centered at $q$ such that
\begin{equation}
\label{eq:SC}
\begin{aligned}
\pmb{u}&=\pmb{x}^{A_1}(\pmb{y}-\pmb{\xi})^{\Lambda}\\
v&=\xmon{\alpha_1}\\
\pmb{w}&=\pmb{w}
\end{aligned}
\end{equation}
 
where $\widetilde{\xi_{j}} \neq 0$ for all $j$ and the matrix $\pmb{\Lambda}= (\lambda_{i,j})$ of exponents has maximal rank and $\pmb{\alpha_1}$ doesn't belong to the span of the rows of $\pmb{A_1}$. So, after blowing-up we have the following expression:
\begin{equation}
\begin{aligned}
\tau^{\ast}T_1&=U\pmb{x}^{S_\nu}+\sum_{=1}^{\nu-1}\pmb{x}^{S_{j}}c_{j}(\pmb{x},\pmb{y},\pmb{w})+\pmb{x}^{S_{0}}c_{0}(\pmb{x},\pmb{y},\pmb{w})
\end{aligned}
\label{eq:SCN}
\end{equation}
where:
\begin{itemize}
 \item The function $U$ is a unit and, for $j>0$, the functions $c_{j}$ are either zero or units (that don't depend on $z$);
 \item The term $\pmb{x}^{S_0}c_{0}$ is the pullback of $b_{0}$. In particular, either $c_{0} = 0$ or it is equal to $w_{{m}}^{\epsilon}\widetilde{c}_{0}$ where $\widetilde{c}_{0}$ is a unit;
 \item We remark that:
\begin{align*}
S_\nu&= \nu \pmb{\alpha}_1\\
S_{j}&= j\pmb{\alpha}_1+\pmb{r}_{1,j}\pmb{A}_1\text{, for }j=0,\ldots,\nu-2.
\end{align*}
So each $S_\nu$ and $S_{j}$ is a sum of an element of the span of the rows of $\pmb{A}_1$ and a different multiple of the $\pmb{\alpha}_1$. Since $\pmb{\alpha}_1$ is linearly independent with the rows of $\pmb{A}_1$, this means that the exponents $S_\nu$ and $S_{j}$ are all distinct. Therefore all of the multi-indexes $S_{j}$ must be different.
\end{itemize}

We consider three cases depending on which generator of $\mathcal{I}$ pulls back to be a generator of the pull-back of $\mathcal{I}^*$:\medskip

\noindent{\emph{Case 2.1:}}[The pull back of $v^{\nu}$ generates $\mathcal{J}^{\ast}$, i.e. $S_{\nu}=\min\{S_{\nu},S_{j},S_0\}$]
In this case, from equation \eqref{eq:SCN}, we have:
\[
 \tau^{\ast}[\umon{\delta}T_{1}]=\xmon{S_{\nu} + \pmb{\delta A}_1}\left[\widetilde{U}+\Omega(\pmb{x},\pmb{y},z,\pmb{w}) \right]
\]
where $\widetilde{U} = U (\pmb{y}- \pmb{\xi})^{\pmb{\delta \Lambda}}$ is a unit and $\Omega( \pmb{0} ,\pmb{y},z,\pmb{w}) \equiv 0$. Since $\xmon{S_{\nu}+\pmb{\delta A}_1}$ is not a first integral of $\widetilde{{\omega}}$ (which follows from the fact that $\pmb{\alpha}_1$ doesn't belong to the span of the rows of $\pmb{A_1}$), we conclude that $\nu(q,{\omega}_r,\mathcal{D}_r) =0< \nu$.
\medskip

\noindent{\emph{Case 2.2:}}[There is a maximum $0 < j_1< \nu$ such that the pull back of $\pmb{u}^{\pmb{r}_{j_1}}v^{j_1}$ is a generator of $\mathcal{J}^{\ast}$, i.e $S_{j_1}=\min\{S_{\nu},S_{j},S_0\}$]. In this case, from equation \eqref{eq:SCN}, we have:
\[
\tau^{\ast}[\umon{\delta}T_{1}]=\xmon{S_{j_1} + \pmb{\delta A}_1}\left[\widetilde{c}_{j_1}+\Omega(\pmb{x},\pmb{y},z,\pmb{w}) \right]
\]
where $\widetilde{c}_{j_1} = c_{j_1} (\pmb{y}- \pmb{\xi})^{\pmb{\delta \Lambda}}$ is a unit and $\Omega( \pmb{0} ,\pmb{y},z,\pmb{w}) \equiv 0$. Since $\xmon{S_{j_1}+\widetilde{\delta}}$ is not a first integral of $\widetilde{{\omega}}$ (which follows from the fact that $\pmb{\alpha}_1$ doesn't belong to the span of the rows of $\pmb{A_1}$), we conclude that $\nu(q,{\omega}_r,\mathcal{D}_r) =0< \nu$.
\medskip

\noindent{\emph{Case 2.3:}}[The pull-back of $\umon{\beta}$ is the generator of $\mathcal{J}^{\ast}$ i.e. $S_0=\min\{S_{\nu},S_{j},S_0\}$]
In this case, we consider two cases depending on $\epsilon$: \medskip

\noindent{\emph{Case 2.3a, $\epsilon=1$:}} Then 
\[
\tau^{\ast}T_{1}=\pmb{x}^{S_0}\left[w_{{m}} \widetilde{c}_0+\Omega(\pmb{x},\pmb{y},z,\pmb{w})\right]
\]
where $\widetilde{c}_0$ is a unit and $\Omega(\pmb{0},\pmb{y},z,\pmb{w})\equiv0$. Since the vector-field $\partial_{w_{{m}}}$ belongs to ${\omega}_r \cdot \mathcal{O}_q$, we conclude that that $\nu(q,{\omega}_r,\mathcal{D}_r) \leq 1 < \nu$.\medskip

\noindent{\emph{Case 2.3b, $\epsilon=0$:}} In this case, note that the monomial $\umon{\beta+\delta}$ is \textbf{not} a first integral of ${\omega}\cdot \mathcal{O}_p$. Thus, Lemma \ref{lem:claim2} guarantees that the total transform $\umon{\beta + \delta} = \pmb{x}^{S_0 + \delta A_1}\widetilde{W} $, where $\widetilde{W} = (\pmb{y}-\pmb{\gamma})^{\pmb{(\delta +\beta)\Lambda}}$ is a unit, satisfies one of the following conditions:
\begin{itemize}
 \item Either $\pmb{x}^{S_0 + \pmb{\delta A_1}}$ is \textbf{not} a first integral of $\widetilde{{\omega}}\cdot \mathcal{O}_q$, which implies that
 \[
\tau^{\ast}[\umon{\delta} T_{1}]=\pmb{x}^{S_0+\pmb{\delta A_1}}U
\]
for some unit $U$. We conclude that that $\nu(q,{\omega}_r,\mathcal{D}_r) = 0 < \nu$;
\item Or, there exists a regular vector-field $\partial_{y_i} \in \widetilde{{\omega}}\cdot \mathcal{O}_q$ such that $\partial_{y_i}\widetilde{W}$ is a unit. In particular:
\[
\tau^{\ast}[\umon{\delta} T_{1}]=   \pmb{x}^{S_0+\pmb{\delta A_1}}\widetilde{W}(0)+  \pmb{x}^{S_0+\pmb{\delta A_1}}\left[\widetilde{W}-\widetilde{W}(0) + \Omega(\pmb{x},\pmb{y},z,\pmb{w})\right]
\]
where $\Omega( \pmb{0} ,\pmb{y},z,\pmb{w}) \equiv 0$ and the monomial $\pmb{x}^{S_0+\pmb{\delta A_1}}W(0)$ is a first integral of $\widetilde{{\omega}}\cdot \mathcal{O}_q$. We conclude that that $\nu(q,{\omega}_r,\mathcal{D}_r) \leq 1 < \nu$.
\end{itemize}

\section*{Acknowledgments}

{I would like to express my gratitude to Edward Bierstone and Franklin Vera-Pacheco for the useful discussions. I would also like to thank Felipe Cano, Wanderson Costa e Silva, Pavao Marde\v{s}i\'{c} and Daniel Panazzolo for the useful comments. Finally, I thank the anonymous referee for several useful suggestions and comments which improved the quality of the paper}.

\bibliographystyle{alpha}

\end{document}